\def\A{\mathrm{A}} \def\AGL{\mathrm{AGL}}  
 \def\calB{\mathcal{B}}
\def\G{\mathrm{G}}   \def\GL{\mathrm{GL}}  \def\GU{\mathrm{GU}}
\def\M{\mathrm{M}} \def\magma{{\sc Magma} }
\def\N{\mathrm{N}}
\def\Out{\mathrm{Out}}
     \def\POm{\mathrm{P\Omega}} \def\PSL{\mathrm{PSL}}    \def\PSp{\mathrm{PSp}} \def\PSU{\mathrm{PSU}}
\def\Q{\mathrm{Q}}
\def\SL{\mathrm{SL}} \def\SO{\mathrm{SO}}  \def\Sp{\mathrm{Sp}}  \def\SU{\mathrm{SU}}  \def\Sy{\mathrm{S}}  \def\Sz{\mathrm{Sz}}
\def\Z{\mathrm{C}} 
\newtheorem{theorem}{Theorem}[section]
\newtheorem{lemma}[theorem]{Lemma}
\newtheorem{corollary}[theorem]{Corollary}
\theoremstyle{definition}
\newtheorem*{remark}{Remark}
\newtheorem{hypothesis}[theorem]{Hypothesis}
\begin{document}

\title{Quasiprimitive groups containing a transitive alternating group}

\author[Xia]{Binzhou Xia}
\address{
Peking University\\
Beijing, 100871\\
P. R. China}
\email{binzhouxia@pku.edu.cn}


\maketitle

\begin{abstract}
This paper classifies quasiprimitive permutation groups with a transitive subgroup which is isomorphic to $\A_n$ for some $n\geqslant5$.
\end{abstract}

\noindent MSC2010: 20B15, 20D06, 20D40\\
\textbf{\noindent Keywords: Quasiprimitive groups; alternating groups; group factorizations}

\section{Introduction}

Throughout this paper, all groups are assumed to be finite. For a group $G$, the expression $G=HK$ with $H,K$ proper subgroups of $G$ is called a \emph{factorization} of $G$, and $H,K$ are called its \emph{factors}.
As a generalization of simple groups, we call a group \emph{almost simple} if its socle is nonabelian simple. There has been a lot of work on factorizations of almost simple groups. In 1987, Hering, Liebeck and Saxl~\cite{HLS} classified factorizations of exceptional groups of Lie type. A factorization $G=HK$ is called a \emph{maximal factorization} of $G$ if both $H,K$ are maximal subgroups of $G$. Later in 1990, Liebeck, Praeger and Saxl published the landmark work~\cite{LPS1990} classifying maximal factorizations of almost simple groups. When the socle is an alternating group, in fact all the factorizations of such a group were determined in~\cite[THEOREM~D]{LPS1990}. Based on the maximal factorizations in~\cite[THEOREM~C]{LPS1990}, Giudici~\cite{Giudici} in 2006 determined the factorizations of sporadic groups. A factorization is said to be \emph{exact} if the intersection of the two factors is trivial. Recently, all exact factorizations of almost simple groups for which one factor is a maximal subgroup have been determined~\cite{LPS2010} by Liebeck, Praeger and Saxl.

Our first result classifies factorizations of finite simple groups with an alternating group factor.

\begin{theorem}\label{thm1}
Suppose that $L=HK$ is a factorization of the finite simple group $L$ with $H\cong\A_n$, where $n\geqslant5$. Then one of the following holds.
\begin{itemize}
\item[(a)] $L=\A_{n+k}$ for $1\leqslant k\leqslant5$, and $K$ is $k$-transitive on $n+k$ points.
\item[(b)] $L=\A_m$ and $K=\A_{m-1}$, where $m$ is the index of a subgroup in $\A_n$.
\item[(c)] $(L,n,K)$ lies in \emph{Table~\ref{tab1}}.
\end{itemize}
\end{theorem}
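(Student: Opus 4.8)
The plan is to reduce the given factorization to a maximal one and then read off the possibilities from the classification of maximal factorizations of almost simple groups. Since $n\geqslant5$, the factor $H\cong\A_n$ is nonabelian simple, and in particular it is a proper subgroup of $L$ (were $H=L$ we would not have a genuine factorization). Choose maximal subgroups $M_1,M_2$ of $L$ with $H\leqslant M_1$ and $K\leqslant M_2$; then $L=HK\leqslant M_1K\leqslant M_1M_2$, so that $L=M_1M_2$ is a maximal factorization. According to the isomorphism type of the simple group $L$, this maximal factorization appears in~\cite[THEOREM~C and THEOREM~D]{LPS1990} (classical or alternating socle), in~\cite{HLS} (exceptional socle), or in~\cite{Giudici} (sporadic socle); the simple groups of prime order admit no factorization into proper subgroups and are excluded. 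The whole argument then amounts to scanning these lists, selecting the maximal factorizations a factor of which can contain $\A_n$, and refining each to the actual factorization $L=HK$.

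First I would treat the case $L=\A_m$ directly, since here \cite[THEOREM~D]{LPS1990} lists \emph{all} factorizations of $L$, not merely the maximal ones. Picking out those with a factor isomorphic to $\A_n$ produces two infinite families together with finitely many sporadic instances. In the first family $H\cong\A_n$ is (conjugate to) the pointwise stabilizer of a $k$-subset, acting as $\A_n$ on the remaining $n+k=m$ points; the equality $L=HK$ then holds precisely when $K$ is transitive on the ordered $k$-tuples of points, that is, $k$-transitive, and since no proper subgroup of an alternating group is $6$-transitive we get $1\leqslant k\leqslant5$, which is conclusion~(a). In the second family the other factor is a point stabilizer $K=\A_{m-1}$, so $L=HK$ says exactly that $H\cong\A_n$ acts transitively on the $m$ points, that is, $m$ is the index in $\A_n$ of the subgroup $H\cap K$; this is conclusion~(b). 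The remaining finitely many factorizations of alternating groups with an $\A_n$ factor are recorded in Table~\ref{tab1}.

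For the remaining socles I would work through the maximal factorizations $L=M_1M_2$ supplied by \cite{LPS1990}, \cite{HLS} and \cite{Giudici}. For each such factorization the task splits into three steps: (i) decide which factor, say $M_1$, can contain a subgroup $H\cong\A_n$ with $n\geqslant5$ --- a strong constraint, since then $|M_1|\geqslant|\A_n|=n!/2$ and $\A_n$ is a section of $M_1$; (ii) using the exact identity $|H|\,|K|=|L|\,|H\cap K|$ together with the known subgroup structure of $M_1$ and $M_2$, determine the subgroups $H\leqslant M_1$ isomorphic to $\A_n$ and the subgroups $K\leqslant M_2$ for which $L=HK$ genuinely holds; and (iii) separately account for the exceptional isomorphisms for small $n$, such as $\A_5\cong\PSL_2(4)\cong\PSL_2(5)$, $\A_6\cong\PSL_2(9)$ and $\A_8\cong\PSL_4(2)$, which realize $\A_n$ as a classical group inside $M_1$ and can thereby contribute additional examples. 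The surviving triples $(L,n,K)$ are collected in Table~\ref{tab1}, giving conclusion~(c).

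I expect the classical-group analysis of step~(ii) to be the main obstacle. The lists in \cite[THEOREM~C]{LPS1990} contain many maximal factorizations, and for most of them merely knowing that $\A_n\leqslant M_1$ is not enough: one must pin down how $\A_n$ embeds in $M_1$ and then check the order identity case by case to decide whether $\A_n$ can serve as a genuine factor. The small-$n$ exceptional isomorphisms of step~(iii) demand particular care, both to avoid double counting against the alternating-socle analysis and to capture correctly the sporadic examples that populate Table~\ref{tab1}.
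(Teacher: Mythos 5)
Your opening reduction---pass to maximal subgroups $M_1\geqslant H$, $M_2\geqslant K$, so that $L=M_1M_2$ is a maximal factorization listed in \cite{LPS1990}, \cite{HLS} or \cite{Giudici}---and your treatment of the alternating socle via \cite[THEOREM~D]{LPS1990} coincide with the paper's first steps (its Lemma~\ref{lem2}). The genuine gap is in what you call step~(ii), which as formulated is a restatement of the problem rather than a method. Tables~1--4 of \cite{LPS1990} contain infinite families of maximal factorizations of classical groups, with unbounded rank and unbounded field size, and the constraints you propose---$|M_1|\geqslant n!/2$, the group $\A_n$ being a section of $M_1$, and the identity $|H||K|=|L||H\cap K|$---do not cut these families down to a finite list: for each member of such a family the factor $M_1$ contains alternating subgroups of many degrees in many inequivalent embeddings, and whether $L=HM_2$ actually holds depends on the embedding through $|H\cap M_2|$, which order bookkeeping alone cannot determine. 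You correctly flag this as ``the main obstacle,'' but you supply no mechanism to overcome it, so the proposed scan does not terminate.

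The paper's mechanism, absent from your proposal, is a minimal-counterexample induction. Under Hypothesis~\ref{hyp1} one takes a counterexample $(L,H,K)$ with $|L|$ minimal and maximal subgroups $A\geqslant H$, $B\geqslant K$. Since $H\leqslant A$ and $L=HK$, the modular law gives a factorization of $A$ itself with $H$ as one factor; Lemma~\ref{lem1}, iterated along a composition series of $A$, then produces a nonabelian composition factor $L_1$ of $A$ and a factorization $L_1=H_1K_1$ with $H_1\cong\A_n$. Because $|L_1|<|L|$, minimality forces this smaller factorization to satisfy the conclusion of Theorem~\ref{thm1}; combined with Lemma~\ref{lem3} ($A$ has a unique nonabelian composition factor), Lemma~\ref{lem4} ($L_1$ is not alternating) and the separately proved base case $n=5$ (Lemma~\ref{lem5}, where $|L|/|B|$ dividing $60$ forces $L$ into the list of primitive groups of degree dividing $60$ in \cite[Appendix~B]{DM}), this pins $L_1$ down to the nine specific classical groups occurring as $L$ in Table~\ref{tab1}. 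That is precisely what collapses the infinite families of \cite{LPS1990} into the finite Tables~\ref{tab2}, \ref{tab3} and~\ref{tab4}, after which divisibility conditions and \magma searches complete the analysis. The base case $n=5$ is not a luxury: the inductive step repeatedly lands on $H\cong\A_5$, so some independent argument for it is required. A quantitative substitute for the induction (for instance, bounds such as \cite[Proposition~5.3.7]{KL} on alternating subgroups of classical groups of given dimension, applied family by family) might conceivably be made to work, but that is a different and substantially heavier argument which your proposal does not develop.
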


\begin{table}[htbp]
\caption{}\label{tab1}
\centering
\begin{tabular}{|l|l|l|l|}
\hline
row & $L$ & $n$ & $K$\\
\hline
1 & $\A_6$ & $5$ & $\A_4$, $\Sy_4$ \\
2 & $\A_{10}$ & $6$ & $\A_8$, $\Sy_8$ \\
3 & $\A_{15}$ & $7$ & $\A_{13}$, $\Sy_{13}$ \\
\hline
4 & $\M_{12}$ & $5$ & $\M_{11}$ \\
\hline
5 & $\PSL_2(11)$ & $5$ & $11$, $11{:}5$ \\
6 & $\PSL_2(19)$ & $5$ & $19{:}9$ \\
7 & $\PSL_2(29)$ & $5$ & $29{:}7$, $29{:}14$ \\
8 & $\PSL_2(59)$ & $5$ & $59{:}29$ \\
\hline
9 & $\PSL_4(3)$ & $6$ & $3^3{:}\PSL_3(3)$ \\
\hline
10 & $\PSU_3(5)$ & $7$ & $5_+^{1+2}{:}8$ \\
\hline
11 & $\PSp_4(3)$ & $6$ & $3_+^{1+2}{:}\Q_8$, $3_+^{1+2}{:}2.\A_4$ \\
\hline
12 & $\Sp_6(2)$ & $6$ & $\PSU_3(3){:}2$ \\
13 & $\Sp_6(2)$ & $7$ & $\PSU_3(3){:}2$ \\
\hline
\multirow{2}*{14} & \multirow{2}*{$\Sp_6(2)$} & \multirow{2}*{$8$} & $3_+^{1+2}{:}8{:}2$, $3_+^{1+2}{:}2.\Sy_4$, $\PSL_2(8)$, \\
 &  &  & $\PSL_2(8){:}3$, $\PSU_3(3){:}2$, $\PSU_4(2){:}2$ \\
\hline
15 & $\Sp_8(2)$ & $6$ & $\SO_8^-(2)$ \\
16 & $\Sp_8(2)$ & $7$ & $\SO_8^-(2)$ \\
17 & $\Sp_8(2)$ & $8$ & $\SO_8^-(2)$ \\
18 & $\Sp_8(2)$ & $9$ & $\SO_8^-(2)$ \\
19 & $\Sp_8(2)$ & $10$ & $\SO_8^-(2)$ \\
\hline
20 & $\Omega_7(3)$ & $8$ & $3^{3+3}{:}\PSL_3(3)$ \\
\hline
\multirow{2}*{21} & \multirow{2}*{$\Omega_7(3)$} & \multirow{2}*{$9$} & $3^3{:}\PSL_3(3)$, $3^{3+3}{:}\PSL_3(3)$ \\
 &  &  & $\PSL_4(3)$, $\PSL_4(3){:}2$, $\G_2(3)$ \\
\hline
22 & $\Omega_8^+(2)$ & $6$ & $\Sp_6(2)$ \\
23 & $\Omega_8^+(2)$ & $7$ & $\Sp_6(2)$ \\
\hline
\multirow{2}*{24} & \multirow{2}*{$\Omega_8^+(2)$} & \multirow{2}*{$8$} & $\Sp_6(2)$, $\PSU_4(2)$, $\PSU_4(2){:}2$, \\
 &  &  & $3\times\PSU_4(2)$, $(3\times\PSU_4(2)){:}2$, $\A_9$ \\
\hline
\multirow{3}*{25} & \multirow{3}*{$\Omega_8^+(2)$} & \multirow{3}*{$9$} & $2^4{:}15.4$, $2^6{:}15$, $2^6{:}15.2$, $2^6{:}15.4$, $\PSU_4(2)$, \\
 &  &  & $\PSU_4(2){:}2$, $3\times\PSU_4(2)$, $(3\times\PSU_4(2)){:}2$, \\
 &  &  & $\Sp_6(2)$, $\A_8$, $\Sy_8$ or $2^4{:}\A_5\leqslant K\leqslant2^6{:}\A_8$ \\
\hline
26 & $\POm_8^+(3)$ & $8$ & $3^6{:}\PSL_4(3)$ \\
27 & $\POm_8^+(3)$ & $9$ & $\Omega_7(3)$, $3^6{:}\PSL_4(3)$ \\
\hline
28 & $\Omega_{10}^-(2)$ & $12$ & $2^8{:}\Omega_8^-(2)$ \\
\hline
\end{tabular}
\end{table}

\begin{remark}
For each pair $(L,K)$ as described in part~(a) or~(b) of Theorem~\ref{thm1}, there holds $L=HK$ with $H\cong\A_n$. Moreover, it is verified by \magma~\cite{magma} that each triple $(L,n,K)$ in Table~\ref{tab1} gives rise to a factorization $L=HK$ with $H\cong\A_n$. The factorization $L=HK$ in row~25 of Table~\ref{tab1} with $(L,H,K)=(\Omega_8^+(2),\A_9,\Sy_8)$ is missed in the main theorem of~\cite{Darafsheh}.
\end{remark}

%
%

By Lemma~\ref{lem1} in Section~\ref{sec1}, one easily derives the following corollary from Theorem~\ref{thm1}.

\begin{corollary}\label{cor1}
Suppose that $H$ and $K$ are subgroups of a finite group $G$ with $G=HK$ and $H\cong\A_n$, where $n\geqslant5$. Then there exists a composition factor $L$ of $G$ such that one of the following holds.
\begin{itemize}
\item[(a)] $L=\A_{n+k}$ for $1\leqslant k\leqslant5$.
\item[(b)] $L=\A_m$, where $m$ is the index of a subgroup in $\A_n$.
\item[(c)] $(L,n)$ lies in \emph{Table~\ref{tab1}}.
\end{itemize}
\end{corollary}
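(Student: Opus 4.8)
The plan is to use the reduction furnished by Lemma~\ref{lem1} to pass from the factorization of the arbitrary finite group $G$ down to a factorization of one of its composition factors, and then to invoke Theorem~\ref{thm1}. Since $n\geqslant5$, the factor $H\cong\A_n$ is a nonabelian simple group, so Lemma~\ref{lem1} applies. First I would apply it to the factorization $G=HK$ to produce a composition factor $L$ of $G$ carrying an induced factorization $L=\overline{H}\,\overline{K}$ in which $\overline{H}\cong\A_n$; here $\overline{H}$ is a copy of $H$ and $\overline{K}$ is the relevant image (or section) of $K$ inside $L$.

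Next I would separate two cases according to whether this induced factorization is proper. In the degenerate case $\overline{H}=L$ we have $L\cong\A_n$, which already gives conclusion~(b): taking $m=n$, the point stabilizer $\A_{n-1}$ is a subgroup of index $n$ in $\A_n$, so $L=\A_m$ with $m$ the index of a subgroup of $\A_n$. Otherwise $L=\overline{H}\,\overline{K}$ is a factorization of the finite simple group $L$ with both factors proper and $\overline{H}\cong\A_n$, $n\geqslant5$, so Theorem~\ref{thm1} applies directly. Its three outcomes then translate into the three outcomes of the corollary: outcome~(a) of Theorem~\ref{thm1} gives $L=\A_{n+k}$ with $1\leqslant k\leqslant5$; outcome~(b) gives $L=\A_m$ with $m$ the index of a subgroup of $\A_n$; and outcome~(c) places $(L,n,\overline{K})$, and hence $(L,n)$, in Table~\ref{tab1}.

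The substantive work—descending from $G$ to a composition factor while retaining the full $\A_n$ factor—is carried out entirely in Lemma~\ref{lem1}, so at this stage I expect no genuine obstacle. The only points requiring care are the degenerate case $L\cong\A_n$, which is folded into~(b) via the index-$n$ subgroup $\A_{n-1}$, and the routine matching of the conclusions of Theorem~\ref{thm1} with those of Corollary~\ref{cor1}. Any real difficulty lies upstream, in establishing Lemma~\ref{lem1}, namely in guaranteeing that a composition factor inheriting an isomorphic copy of $H$ as a factor can always be located (for instance by choosing a normal subgroup $N\trianglelefteq G$ maximal subject to $H\not\leqslant N$, so that $H\cap N=1$ and $\overline{H}=HN/N\cong\A_n$ factorizes the relevant simple section of $G/N$).
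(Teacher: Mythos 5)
Your overall route is the one the paper intends (the paper never writes the proof out; it just points to Lemma~\ref{lem1}, the Jordan--H\"older theorem and Theorem~\ref{thm1}), and two of your steps are fine: the degenerate case $\overline{H}=L$ is correctly folded into outcome~(b) with $m=n$, and the translation of the conclusions of Theorem~\ref{thm1} into those of the corollary is routine. The gap is in the descent itself. Your case analysis at the composition factor assumes that either $\overline{H}=L$ or \emph{both} induced factors are proper, and silently discards the third possibility: $\overline{H}<L$ but $\overline{K}=L$. This case is fatal if it occurs: the paper's notion of factorization (and the hypothesis of Theorem~\ref{thm1}) requires both factors proper, and when $\overline{K}=L$ nothing can be concluded about $L$, since \emph{every} simple group $L$ containing a copy of $\A_n$ satisfies $L=\overline{H}L$. (This is also why the corollary itself tacitly needs $K<G$: with $K=G$, $H=\A_5\leqslant G=\A_{100}$ is a counterexample to the statement.) The bad branch genuinely arises in the descent: take $G=\A_5\times\A_6$, $H=\{(a,a^\iota):a\in\A_5\}$ for an embedding $\iota\colon\A_5\hookrightarrow\A_6$, $K=1\times\A_6$, and $N=\A_5\times1$. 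Then $H\cap N=1$ and Lemma~\ref{lem1} yields $G/N=\overline{H}\,\overline{K}$ with $\overline{H}\cong\A_5$ proper and $\overline{K}=G/N\cong\A_6$, where your dichotomy stalls. Here one escapes by descending along $1\times\A_6$ instead (then $HN=G$ and $G/N\cong\A_5$), but your proof gives no rule for choosing $N$ and no argument that a good choice always exists; your closing suggestion ($N$ maximal with $H\nleqslant N$) does not repair this, because such a quotient $G/N$ need not be simple and you never produce a proper factorization of whatever ``simple section'' you have in mind.

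What closing the gap actually requires is worth recording. The truly bad configuration is the one in which \emph{every} nontrivial proper normal subgroup $N$ of $G$ satisfies $H\cap N=1$, $KN=G$ and $HN<G$. In that situation $K$ is automatically core-free in $G$ (if $1\neq\mathrm{core}_G(K)=D$ then $KD=G$ forces $K=G$), so $G$ acts faithfully on $[G{:}K]$, and the condition $KN=G$ for all such $N$ says precisely that every nontrivial normal subgroup is transitive, i.e.\ $G$ is \emph{quasiprimitive} on $[G{:}K]$ with a transitive subgroup $H\cong\A_n$ meeting no proper nontrivial normal subgroup. Disposing of this is not a formal consequence of Lemma~\ref{lem1} and Theorem~\ref{thm1}: the abelian-socle case is excluded only by Lemma~\ref{lem7}, and the remaining O'Nan--Scott types need the analysis in the proof of Theorem~\ref{thm2} (via \cite{BP} and \cite{Baumeister}), which shows that $H$ must lie in the socle --- contradicting $H\cap N=1$ unless $G$ is simple --- or else exhibits $\A_n$ (socle $\A_n\times\A_n$, or $\A_6\times\A_6$ when $n=6$) as a composition factor, which again lands in outcome~(b). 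So a complete proof runs your induction while carrying properness of $K$ through every step, and invokes Theorem~\ref{thm2} (or reproduces its socle analysis) in the residual branch; the corollary is really a consequence of Theorem~\ref{thm1} \emph{together with} Theorem~\ref{thm2}, not of Theorem~\ref{thm1} and Lemma~\ref{lem1} alone.
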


In~\cite{LPS2010} all primitive groups having a regular subgroup which is almost simple are classified. This has a surprising consequence regarding Cayley graphs of simple groups: if a Cayley graph of some simple group is vertex primitive, then either the Cayley graph is a complete graph or a graph associated with the action of the alternating groups on $2$-subsets, or the connecting set of the Cayley graph consists of several full conjugacy classes. Inspired by the result of~\cite{LPS2010}, one might expect a classification of for primitive groups having a transitive subgroup which is almost simple. Although a complete classification seems to be fairly difficult, it is possible to achieve such a goal for certain families of almost simple groups, and the present paper will tackle this for alternating groups. In fact, our main result, Theorem~\ref{thm2} below, classifies quasiprimitive groups containing a transitive alternating group. Here a permutation group $G$ is called \emph{quasiprimitive} if every nontrivial normal subgroup of $G$ is transitive. The notion of quasiprimitive groups is a generalization of primitive groups, which arises naturally for instance in the analysis of $2$-arc transitive graphs, see~\cite{Praeger1993}. As a result, Theorem~\ref{thm2} will be crucial in classifying $2$-arc transitive graphs admitting a vertex transitive alternating group, especially, $2$-arc-transitive Cayley graph of alternating groups.

For a group $T$, denote by $D(2,T)$ the permutation group on $T$ generated by the holomorph of $T$ and the involution $x\mapsto x^{-1}$, $x\in T$.

\begin{theorem}\label{thm2}
Let $G$ be a quasiprimitive permutation group on $\Omega$ and $\alpha\in\Omega$. If $G$ contains a transitive subgroup $H$ such that $H\cong\A_n$ for some $n\geqslant5$, then one of the following holds.
\begin{itemize}
\item[(a)] $G$ is almost simple with socle $L$, say, and $H\leqslant L$. Furthermore, either $L=\A_n$, or the factorization $L=HL_\alpha$ is described in \emph{Theorem~\ref{thm1}}.
\item[(b)] $G\leqslant D(2,\A_n)$ is primitive with socle $\A_n\times\A_n$, and $H$ is regular.
\item[(c)] $n=6$, $G$ is primitive with socle $\A_6\times\A_6$, and $G\leqslant\Sy_6\wr\Sy_2$ by the product action on $6^2$ points.
\end{itemize}
\end{theorem}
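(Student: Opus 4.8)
The plan is to apply the O'Nan--Scott theorem for quasiprimitive permutation groups (Praeger~\cite{Praeger1993}) and to work through its types, exploiting throughout that $H\cong\A_n$ is \emph{simple} and transitive. Put $N=\Soc(G)$; quasiprimitivity makes $N$ transitive, and since $N\trianglelefteq G$ the subgroup $H\cap N$ is normal in the simple group $H$, so that either $H\cap N=1$ or $H\leqslant N$. The whole argument is organised around this dichotomy: I would first show that $H\cap N=1$ never occurs (for nonabelian socle), reducing everything to the case $H\leqslant N$, and then analyse the (twisted) diagonal subgroups $H$ of $N=T^{k}$ type by type.

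To dispose of $H\cap N=1$: in the almost simple type ($N=T$ simple, $k=1$) we would get an embedding $\A_n\hookrightarrow\Out(T)$, impossible since $\Out(T)$ is solvable by the Schreier conjecture. For the types with $N=T^{k}$ and $k\geqslant2$, composing an embedding $\A_n\hookrightarrow G/N\leqslant\Out(T)\wr\Sy_k$ with the projection onto $\Sy_k$ and using the solvability of $\Out(T)^{k}$ forces $\A_n\hookrightarrow\Sy_k$, whence $k\geqslant n$. Since $H$ is transitive, $|\Omega|$ divides $n!/2$; but in each of these types $|\Omega|$ is a $(k-1)$-st or $k$-th power of an integer $\geqslant2$ with $k\geqslant n$, so for a suitable prime $p$ (for the diagonal types one dividing $|T|$, which by Burnside's $p^aq^b$ theorem has a prime divisor $\geqslant5$) the valuation $v_p(|\Omega|)$ far exceeds $v_p(n!)$, a contradiction. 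The affine type is ruled out separately: there $N$ is regular elementary abelian and self-centralising and $H\cap N=1$, and no transitive permutation representation of $\A_n$ of prime-power degree extends to a transitive affine representation meeting the translations trivially. Hence we may assume $H\leqslant N=T^{k}$.

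With $H\leqslant N=T^{k}$, each nontrivial coordinate projection $H\to T$ is injective (as $H$ is simple), so $\A_n\hookrightarrow T$ and $H$ is a twisted diagonal subgroup; transitivity of $H$ forces every projection to be nontrivial, so $|H|=|\A_n|\leqslant|T|$. When $k=1$ this gives $L:=N=T$ with $H\leqslant L$ and $L=HL_\alpha$ a factorization with factor $H\cong\A_n$, to which Theorem~\ref{thm1} applies: this is part~(a) (the subcase $H=L$ being $L=\A_n$). For $k\geqslant2$, comparing $|\A_n|\leqslant|T|$ with the degree eliminates the compound diagonal and holomorph compound types outright, since there $|\Omega|\geqslant|T|^{2}$; and in the simple diagonal and holomorph simple types it forces $k=2$ with $T\cong\A_n$ and $H$ regular. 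In these last two cases $\Soc(G)=\A_n\times\A_n$, and computing $\Nor_{\Sym(\Omega)}(N)$ for this regular-type socle identifies the overgroup as $D(2,\A_n)$, giving part~(b).

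The heart of the matter is the product action type, where $\Omega=\Delta^{k}$ with $T$ primitive on $\Delta$ and $H=\{(g^{\sigma_1},\dots,g^{\sigma_k}):g\in\A_n\}$ a twisted diagonal of $T^{k}$. Each projection $\A_n\cong p_i(H)\leqslant T$ must be transitive on $\Delta$, and the standard correspondence between transitivity of a diagonal on a product and factorizations shows that transitivity of $H$ on $\Delta^{k}$ is equivalent to the $k$ point stabilizers $\sigma_i^{-1}(T_{\delta_i})\cap\A_n$ (all of index $|\Delta|$) forming a strong multiple factorization of $\A_n$; for $k=2$ this reads $\A_n=A\,\sigma^{-1}(B)$ with $A,B$ both of index $|\Delta|$. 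Feeding this into the classification of factorizations of $\A_n$ in Theorem~\ref{thm1} (equivalently~\cite[THEOREM~D]{LPS1990}) leaves only $n=6$ with $|\Delta|=6$, realised by the two inequivalent $6$-point actions of $\A_6$ interchanged by its exceptional outer automorphism, and this also forces $k=2$ --- precisely part~(c). I expect this product-action step to be the main obstacle: one must translate transitivity into an equal-index (multiple) factorization, invoke the factorization classification to isolate $\A_6$, and separately rule out $k\geqslant3$ via the non-existence of the requisite triple factorizations; the eliminations in the other types rest on the uniform Schreier and $p$-adic valuation estimates above.
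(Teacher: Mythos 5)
Your overall strategy (a direct O'Nan--Scott case analysis, splitting on $H\cap N=1$ versus $H\leqslant N$ and then running through the types) is a legitimate alternative to the paper's route, which instead quotes Baddeley--Praeger on primitive overgroups of quasiprimitive groups to force $k=2$ and $H\cong T$ at one stroke. But your treatment of the product action type contains a genuine gap: you assume that a \emph{quasiprimitive} group of type PA acts on $\Omega=\Delta^k$ with $N_\alpha$ of product form, so that transitivity of the diagonal $H$ translates into an equal-index (multiple) factorization of $\A_n$. That structure is what primitivity of PA type gives. For quasiprimitive PA groups, Praeger's theorem only provides a $G$-invariant partition $\calB$ of $\Omega$ on which $G$ acts (faithfully, by quasiprimitivity) as a product-action group on $\Delta^k$; on $\Omega$ itself the stabilizer $N_\alpha$ is merely a nontrivial, non-subdirect subgroup with proper projections, and $|\Omega|=|T^k:N_\alpha|$ need not be a power of anything. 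So your factorization equivalence is valid only on the quotient $\calB$. After you obtain $n=6$, $k=2$, $|\Delta|=6$ there, you must still descend to $\Omega$: classify the core-free subgroups $K=G_\alpha$ lying below a block stabilizer with $G=HK$ and $G$ quasiprimitive on $[G{:}K]$. This is exactly the step the paper performs (by a \magma search), and it is not vacuous: besides the index-$36$ case (part~(c)) it also produces $K$ of index $360$, where $H$ is regular and one lands in part~(b) --- an outcome your PA analysis, which concludes ``precisely part~(c)'', cannot see. The same false premise (``$|\Omega|$ is a $(k-1)$-st or $k$-th power'') also undermines your $p$-adic elimination of $H\cap N=1$ in type PA: there the bound is only $|\Omega|\geqslant 5^k$, which does not exceed $n!/2$ once $n\geqslant13$, so that case is not actually closed.

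Two smaller points. First, in the $H\leqslant N$, $k\geqslant2$ branch you never mention the twisted wreath (TW) type; it is killed by the same inequality $|\Omega|=|T|^k\geqslant|T|^2>|H|$ you use for HC/CD, but it should be said. Second, your identification of the overgroup in the diagonal case as $D(2,\A_n)$ by computing $\Nor_{\Sym(\Omega)}(N)$ is fine (the paper instead cites the classification of regular subgroups of primitive groups), and your replacement of Baumeister's theorem by the equal-index factorization analysis via Theorem~\ref{thm1}/\cite[THEOREM~D]{LPS1990} is sound \emph{on the primitive quotient}; so the proposal is repairable, but only by inserting the quotient-then-descend mechanism (or an equivalent, such as the paper's appeal to \cite{BP} plus the final factorization search) that your current write-up omits.
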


The proof of Theorem~\ref{thm1} is given in Section~\ref{sec2}. Then in Section~\ref{sec3}, we prove Theorem~\ref{thm2} based on the result of Theorem~\ref{thm1}.

\section{Preliminaries}\label{sec1}

The notation in this paper is quite standard: it follows \cite{atlas} except for some classical simple groups. We will need the detailed information, such as the maximal subgroups, of some finite simple groups, for which the reader is also referred to \cite{atlas}. Hereafter, all the groups are assumed to be finite.

We first give several equivalent conditions for a group factorization.

\begin{lemma}\label{lem6}
Let $H,K$ be subgroups of $G$. Then the following are equivalent.
\begin{itemize}
\item[(a)] $G=HK$.
\item[(b)] $G=H^xK^y$ for any $x,y\in G$.
\item[(c)] $|H\cap K||G|=|H||K|$.
\item[(d)] $|G|\leqslant|H||K|/|H\cap K|$.
\item[(e)] $H$ acts transitively on $[G{:}K]$ by right multiplication.
\item[(f)] $K$ acts transitively on $[G{:}H]$ by right multiplication.
\end{itemize}
\end{lemma}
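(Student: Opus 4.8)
The plan is to prove Lemma~\ref{lem6} by establishing a cycle of implications together with a couple of standalone equivalences, relying only on elementary counting in finite groups. The natural backbone is the classical coset-counting identity. For any two subgroups $H,K\leqslant G$, the set $HK=\{hk : h\in H,\ k\in K\}$ is a union of right cosets of $K$ (and left cosets of $H$), and a standard double-counting argument shows $|HK|=|H||K|/|H\cap K|$. This single identity is the engine behind the whole lemma, so I would prove it first as the crux of the argument.

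First I would record that $|HK|=|H||K|/|H\cap K|$. To see this, consider the map $H\times K\to HK$, $(h,k)\mapsto hk$, and observe that the fibre over a given element $g=hk$ is $\{(hd,d^{-1}k) : d\in H\cap K\}$, which has exactly $|H\cap K|$ elements; hence $|H||K|=|HK|\cdot|H\cap K|$. With this in hand, the equivalence of (a), (c) and (d) is immediate: the factorization $G=HK$ holds exactly when $|HK|=|G|$, which by the identity is equivalent to $|H\cap K|\,|G|=|H||K|$, giving (c); and since always $HK\subseteq G$, we have $|HK|\leqslant|G|$, so $|G|\leqslant|HK|=|H||K|/|H\cap K|$ forces equality, giving the equivalence with (d).

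Next I would handle (e) and (f), which express the factorization in terms of transitive actions on coset spaces. For (e), note that $H$ acts on the right coset space $[G{:}K]$ by right multiplication, and the orbit of the coset $K$ under this action is $\{Kh : h\in H\}$, whose union of representatives is precisely $HK$. Thus $H$ is transitive on $[G{:}K]$ if and only if every coset of $K$ meets $HK$, i.e. $HK=G$, which is (a); the equivalence of (f) with (a) follows symmetrically using left cosets or by passing to inverses. Finally, for (b) I would use the observation that the factorization property is invariant under conjugation of the factors: since $G=HK$ iff $|H\cap K|\,|G|=|H||K|$ by the already-proved equivalence, and $|H^x|=|H|$, $|K^y|=|K|$, it suffices to show $|H^x\cap K^y|=|H\cap K|$ for all $x,y$. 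This follows because $G=HK$ already forces $G=H^xK^y$ (every element $g$ can be rewritten using $g=x^{-1}g'y^{-1}$ with $g'\in G=HK$); conversely taking $x=y=1$ recovers (a). I would present (b)$\Leftrightarrow$(a) last, deriving it cleanly from the counting criterion (c) once conjugation-invariance of the relevant orders is noted.

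I do not anticipate a serious obstacle here, as every step is elementary; the only point requiring slight care is the fibre-counting in the fundamental identity, where one must verify that the map $d\mapsto(hd,d^{-1}k)$ genuinely parametrizes the full preimage of $g=hk$ and that distinct $d\in H\cap K$ give distinct pairs. Once that identity is secured, the remaining equivalences are short. The mild subtlety in part~(b) is making sure the conjugation argument is stated so that it yields equivalence with (a) rather than merely one implication; I would phrase it via the symmetric counting criterion (c), which makes the invariance under conjugation transparent and avoids any case analysis.
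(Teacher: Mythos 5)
The paper itself offers no proof of this lemma (it is dismissed as ``easy to prove''), so your proposal can only be measured against the standard argument it implicitly invokes. Your handling of (a), (c), (d) is that standard argument and is correct: the fibre-counting identity $|HK|=|H||K|/|H\cap K|$, together with $HK\subseteq G$, immediately gives (a)$\Leftrightarrow$(c)$\Leftrightarrow$(d). Parts (e) and (f) are essentially right but contain a persistent $HK$-versus-$KH$ slip: the union of the orbit $\{Kh : h\in H\}$ of the point $K\in[G{:}K]$ is $KH$, not $HK$, and ``every right coset of $K$ meets $HK$'' is \emph{not} equivalent to $G=HK$ (it is equivalent to $G=KHK$, a strictly weaker condition; e.g.\ $G=\Sy_3$, $H=\langle(12)\rangle$, $K=\langle(13)\rangle$ satisfies $KHK=G$ but $HK\neq G$). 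The intended argument survives because $KH$ is a union of right cosets of $K$, so transitivity is equivalent to $KH=G$, which is equivalent to $HK=G$ by taking inverses --- but that inversion step must be said.

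The genuine gap is in (b). Your reduction of (a)$\Rightarrow$(b) to the equality $|H^x\cap K^y|=|H\cap K|$ is fine, but your justification of that equality is ``because $G=HK$ already forces $G=H^xK^y$'' --- which is precisely statement (b), so the argument is circular as written. Note that the equality is genuinely false without the hypothesis $G=HK$ (same example as above, with $x,y$ chosen so that $H^x=K^y$), so it cannot be rescued by any unconditional conjugation-invariance principle; its only proof goes through (b) itself or through a direct argument. The parenthetical rewriting does not supply such a direct argument: from $g=x^{-1}g'y^{-1}$ with $g'=hk$ you only get $g=x^{-1}hky^{-1}$, which is not visibly an element of $H^xK^y=(x^{-1}Hx)(y^{-1}Ky)$. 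The missing step is a double-coset absorption: $H^xK^y=x^{-1}\,H(xy^{-1})K\,y$, and since $xy^{-1}\in G=HK$ one may write $xy^{-1}=h_0k_0$, whence $H(xy^{-1})K=Hh_0k_0K=HK=G$ and so $H^xK^y=x^{-1}Gy=G$. Alternatively, deduce (b) from (e): transitivity of a subgroup on a $G$-set is preserved under conjugation of the subgroup, and $[G{:}K]$ and $[G{:}K^y]$ are isomorphic $G$-sets. Either patch is two lines, but one of them is required; as submitted, (b) is not proved.
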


The above lemma is easy to prove while playing a fundamental role in the study of group factorizations: due to part~(b) we will consider conjugacy classes of subgroups when studying factorizations of a group; given a group $G$ and its subgroups $H,K$, in order to inspect whether $G=HK$ holds we only need to compute the orders of $G$, $H$, $K$ and $H\cap K$ by part~(c) or~(d), which enables us to search factorizations of a group rather quickly in \magma~\cite{magma}, for example.

The next lemma, applied together with Jordan-H\"{o}lder Theorem, shows that it is crucial to study factorizations of simple groups to study factorizations of a general group with a simple factor.

\begin{lemma}\label{lem1}
Suppose that $H,K$ are subgroup of $G$ such that $G=HK$ and $H$ is simple. Then for any normal subgroup $N$ of $G$, either $N=H(K\cap N)$ or $G/N=\overline{H}\,\overline{K}$ for some $\overline{H}\cong H$ and $\overline{K}\cong K/(K\cap N)$.
\end{lemma}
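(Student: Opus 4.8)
The plan is to exploit the simplicity of $H$ to pin down the intersection $H\cap N$, which is what splits the conclusion into its two alternatives. Since $N$ is normal in $G$, the subgroup $H\cap N$ is normal in $H$; as $H$ is simple, this forces either $H\cap N=1$ or $H\leqslant N$. These two possibilities correspond exactly to the two cases in the statement, so the whole argument is a clean dichotomy driven by simplicity.

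First I would treat the case $H\leqslant N$. Here I compute $N$ directly from the hypothesis $G=HK$: writing $N=N\cap G=N\cap HK$ and applying Dedekind's modular law (which is legitimate precisely because $H\leqslant N$), I obtain $N\cap HK=H(N\cap K)$. This gives the first conclusion $N=H(K\cap N)$.

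In the complementary case $H\cap N=1$, I would pass to the quotient $\overline{G}=G/N$ and set $\overline{H}=HN/N$ and $\overline{K}=KN/N$. Since the canonical projection is a homomorphism, the image of a product is the product of images, so $\overline{G}=\overline{H}\,\overline{K}$. The second isomorphism theorem then identifies $\overline{H}\cong H/(H\cap N)\cong H$ (using $H\cap N=1$) and $\overline{K}\cong K/(K\cap N)$, which is exactly the second conclusion.

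I do not anticipate a genuine obstacle: the lemma is elementary, and its role is organizational rather than technical (it feeds into a Jordan--H\"older induction, as the surrounding text indicates). The only points demanding care are the correct invocation of Dedekind's modular law in the first case and the bookkeeping of the isomorphism-theorem identifications in the second; both are routine once the simplicity of $H$ has reduced matters to these two cases.
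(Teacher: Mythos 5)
Your proof is correct and follows essentially the same route as the paper's: the dichotomy $H\cap N=1$ or $H\leqslant N$ from simplicity, Dedekind's modular law for the case $H\leqslant N$, and the images $HN/N$, $KN/N$ with the second isomorphism theorem for the case $H\cap N=1$. You merely make explicit (the modular law, the surjectivity of the quotient map) what the paper's terser proof leaves implicit.
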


\proof
As $N$ is normal in $G$ and $H$ is simple, we have $H\cap N=1$ or $H$. First assume that $H\cap N=1$. Let $\overline{H}=HN/N$ and $\overline{K}=KN/N$. Then $\overline{H}\cong H$ and $\overline{K}\cong K/(K\cap N)$. Next assume that $H\cap N=H$. Then combination of the equality $G=HK$ and condition
$H\leqslant N$ deduces that $N=H(K\cap N)$.
\qed

%
%

%

\section{Proof of Theorem~\ref{thm1}}\label{sec2}

\begin{lemma}\label{lem2}
If $L$ is a non-classical simple group, then \emph{Theorem~\ref{thm1}} holds.
\end{lemma}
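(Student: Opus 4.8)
The plan is to exhaust the non-classical simple groups by type and verify in each case that any factorization $L=HK$ with $H\cong\A_n$ and $n\geqslant5$ falls under one of the three conclusions of Theorem~\ref{thm1}. The non-classical simple groups split into the alternating groups $\A_m$, the $26$ sporadic groups, and the exceptional groups of Lie type, so I would treat these three families in turn. Throughout I would lean on Lemma~\ref{lem6}: since $L=HK$ with $H\cong\A_n$ forces $|H|$ to divide $|L|$ and $|L|$ to divide $|H|\,|K|/|H\cap K|$, the order arithmetic already eliminates most candidate groups $L$, and what survives can be matched against the known classifications of factorizations.

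First I would dispose of $L=\A_m$. Here a transitive factor $H\cong\A_n$ acting inside $\A_m$ means $H$ meets each coset of $K$, and the relevant data is exactly \cite[THEOREM~D]{LPS1990}, which determines all factorizations of $\A_m$ and $\Sy_m$. Reading off that list, the intransitive/imprimitive maximal subgroups give the generic families: either $K$ is $k$-transitive on $m=n+k$ points with $1\leqslant k\leqslant5$ (conclusion~(a)), or $K=\A_{m-1}$ is a point stabilizer with $m$ the index of some subgroup of $H\cong\A_n$ (conclusion~(b)); the finitely many sporadic factorizations of small alternating groups, such as those in $\A_6$, $\A_{10}$, $\A_{15}$, land in rows~1--3 of Table~\ref{tab1}. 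Second, for the sporadic $L$ I would invoke Giudici's classification~\cite{Giudici} of factorizations of sporadic groups: scanning it for a factor isomorphic to some $\A_n$ with $n\geqslant5$ yields only the factorization of $\M_{12}$ with $H\cong\A_5$ and $K=\M_{11}$, which is row~4 of Table~\ref{tab1}.

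The remaining and most substantial case is $L$ of exceptional Lie type, where I would use the Hering--Liebeck--Saxl classification~\cite{HLS} of factorizations of exceptional groups together with~\cite{LPS1990}. The strategy is to run through the exceptional families and, for each, check whether either factor can be an alternating group $\A_n$ with $n\geqslant5$. The key constraints are again arithmetic: $|\A_n|$ must divide $|L|$, which already forces $n$ to be small relative to the Lie rank and characteristic, and the factor $\A_n$ must embed as (or inside) a factor appearing on the classification list. I expect this case to terminate with no new examples beyond what order considerations trivially allow, so that the exceptional groups contribute nothing to Table~\ref{tab1}, consistent with all the table rows being alternating or classical (or the single sporadic row).

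The main obstacle will be handling the exceptional groups of Lie type cleanly: the factorization list in~\cite{HLS} is stated in terms of the maximal subgroups of these groups, and one must be careful that the alternating factor $\A_n$ need not itself be maximal but only contained in a maximal subgroup, so I would combine Lemma~\ref{lem6}(b)--(d) with the subgroup structure to rule out the near-misses where a small maximal factor happens to contain an alternating section. A secondary care point is the $\A_m$ analysis, where one must confirm that the $k$-transitive groups of conclusion~(a) and the point-stabilizer factorizations of conclusion~(b) genuinely exhaust \cite[THEOREM~D]{LPS1990} apart from the three tabulated sporadic rows; this is a finite check but requires matching the exceptional small-degree factorizations of \cite[THEOREM~D]{LPS1990} precisely against rows~1--3.
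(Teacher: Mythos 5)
Your overall architecture coincides with the paper's: the same three-way split (alternating, sporadic, exceptional Lie type) with the same three citations --- \cite{Giudici} for the sporadic groups, \cite{HLS} for the exceptional groups, and \cite[THEOREM~D]{LPS1990} for $L=\A_m$. The sporadic and exceptional cases are indeed one-line citations, exactly as in the paper. However, your flagged ``main obstacle'' for the exceptional groups is a non-issue: \cite{HLS} classifies \emph{all} factorizations of exceptional groups of Lie type, not merely the maximal ones, so an alternating factor cannot hide inside a maximal factor and no extra care with near-misses is needed. (That concern is real for classical groups, where only maximal factorizations are classified, and it is what forces the minimal-counterexample machinery in the rest of the paper's Section~\ref{sec2} --- but it plays no role in this lemma.)

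The genuine gap is in the case $L=\A_m$, which carries all the content of the lemma, and where your description of the method would not produce rows~1--3 of Table~\ref{tab1}. Theorem~D does not list those rows as exceptional entries to be ``read off'': its only exceptional factorization is the degree-$6$ one ($H=\PSL_2(5)$, $K\leqslant\Sy_3\wr\Sy_2$, both transitive), and the paper assigns that to conclusion~(a) with $n=5$, $k=1$, not to rows~1--3. Rows~1--3 instead come out of the \emph{generic} case of Theorem~D, in the configuration where the given factor $H\cong\A_n$ is itself the $k$-homogeneous factor with $k\geqslant2$, acting on $m>n$ points through an exceptional identification. Handling this configuration needs two ingredients your plan never mentions: Kantor's theorem \cite{Kantor}, to upgrade $k$-homogeneity of $H$ to $k$-transitivity (the $k$-homogeneous but not $k$-transitive groups are never alternating), and then the classification of $2$-transitive groups, to enumerate the non-natural $2$-transitive actions of alternating groups ($\A_5\cong\PSL_2(5)$ on $6$ points, $\A_6\cong\PSL_2(9)$ on $10$ points, $\A_7<\PSL_4(2)$ on $15$ points), which yields $(m,n)=(6,5)$, $(10,6)$, $(15,7)$ and, after pinning $K$ between $\A_{m-2}$ and $(\Sy_{m-2}\times\Sy_2)\cap\A_m=\Sy_{m-2}$, gives rows~1--3. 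Symmetrically, when $H$ is the intransitive factor, Theorem~D only says $K$ is $k$-homogeneous; the paper obtains the $k$-transitivity asserted in conclusion~(a) by observing that $H=\A_{m-k}$ is the stabilizer of a $k$-tuple, so the factorization forces $K$ to be transitive on $k$-tuples. Your proposal states both endpoints (conclusion~(a) with $K$ $k$-transitive; rows~1--3) but supplies a mechanism for neither, and the ``finite check'' you defer is precisely the proof; it is also genuinely delicate --- for instance, $\PSL_4(2)\cong\A_8$ is likewise $2$-transitive on $15$ points and must be weighed in the same enumeration.
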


\proof
Let $L$ be a non-classical simple group satisfying the assumption of Theorem~\ref{thm1}. Consulting the classification of factorizations of exceptional groups of Lie type \cite{HLS}, one sees that $L$ is not an exceptional group of Lie type. If $L$ is a sporadic simple group, then $(L,n,K)$ lies in row~4 of Table~\ref{tab1} by \cite{Giudici}. Next assume that $L=\A_m$ acting naturally on a set $\Omega$ of $m$ points. Then $m\geqslant6$, and according to Theorem~D and Remark~2 after it in \cite{LPS1990} one of the following cases appears.
\begin{itemize}
\item[(i)] $\A_{m-k}\leqslant H\leqslant\Sy_{m-k}\times\Sy_k$ and $K$ is $k$-homogeneous on $\Omega$ for some $1\leqslant k\leqslant5$.
\item[(ii)] $H$ is $k$-homogeneous on $\Omega$ and $\A_{m-k}\leqslant K\leqslant\Sy_{m-k}\times\Sy_k$ for some $1\leqslant k\leqslant5$.
\item[(iii)] $n=6$, $H=\PSL_2(5)$ and $K\leqslant\Sy_3\wr\Sy_2$ with $H,K$ both transitive on $\Omega$.
\end{itemize}
Note that case~(iii) is described in part~(a) of Theorem~\ref{thm1} with $n=5$ and $k=1$.

Assume that case~(i) appears. Then $H=\A_{m-k}$ and $m=n+k$. Since $H$ is the stabilizer of a $k$-tuple over $\Omega$, it follows that $K$ is $k$-transitive on $\Omega$, as part~(a) of Theorem~\ref{thm1}.

Next assume that case~(ii) appears. If $k=1$ then $K=\A_{m-1}$ and $H$ is transitive on $\Omega$, which leads to part~(b) of Theorem~\ref{thm1}. Thus assume $k\geqslant2$. Then \cite{Kantor} shows that $H$ is $k$-transitive on $\Omega$, and inspecting the classification of $2$-transitive permutation groups we conclude that $k=2$ and $(m,n)=(6,5)$, $(10,6)$ or $(15,7)$. Consequently,
$$
\A_{m-2}\leqslant K\leqslant(\Sy_{m-2}\times\Sy_2)\cap\A_m=\Sy_{m-2},
$$
and so $(L,n,K)$ lies in one of rows~1--3 of Table~\ref{tab1}.
\qed

\begin{lemma}\label{lem5}
If $H\cong\A_5$, then \emph{Theorem~\ref{thm1}} holds.
\end{lemma}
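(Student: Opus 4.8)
The plan is to exploit the smallness of $\A_5$ through a degree bound, which collapses the problem to a finite check. Since $L=HK$ with $H\cong\A_5$, Lemma~\ref{lem6}(e) shows that $H$ acts transitively on $[L{:}K]$, whence $|L{:}K|=|H{:}H\cap K|$ by Lemma~\ref{lem6}(c). As $H\cap K$ is a proper subgroup of $\A_5$ (otherwise $H\leqslant K$ and $L=K$), the index $|L{:}K|$ is the index of a proper subgroup of $\A_5$, so
\[
|L{:}K|\in\{5,6,10,12,15,20,30,60\}.
\]
In particular $L$ has a subgroup of index at most $60$, hence a faithful transitive permutation representation of degree at most $60$, and $|K|\geqslant|L|/60$. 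By Lemma~\ref{lem2} I may assume $L$ is a classical simple group, so the first task is to list the classical simple groups admitting a subgroup of index in the set above, i.e.\ with minimal faithful permutation degree at most $60$; this is a finite list recoverable from the low-dimensional/low-index data in \cite{atlas} and \cite{LPS1990}.

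The main structural step is to pass to maximal overgroups so that the classification of maximal factorizations applies. Choosing a maximal subgroup $M\geqslant K$ gives $L=HM$ with $|L{:}M|\leqslant|L{:}K|\leqslant60$, and choosing a maximal subgroup $\widetilde H\geqslant H$ gives a \emph{maximal} factorization $L=\widetilde H\,M$ with $M$ of small index. Theorem~B of \cite{LPS1990} then supplies the finite list of candidate pairs $(\widetilde H,M)$ for each relevant $L$. For each candidate I would then ask the refined question: does some $H\cong\A_5$ inside $\widetilde H$ satisfy $L=HK$ for a suitable $K\leqslant M$? Equivalently, by Lemma~\ref{lem6}(c), does $L$ contain a copy of $\A_5$ meeting $K$ in a subgroup of order exactly $60\,|K|/|L|$? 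The finitely many small classical groups on the list (such as the relevant $\PSL_3(q)$, $\PSU_3(q)$, $\PSp_4(3)$, $\Omega$- and $\PSp$-groups of small degree) can be settled directly, and where convenient verified with \magma~\cite{magma}; I expect all of these to be eliminated for $n=5$, since the only classical entries of Table~\ref{tab1} with $n=5$ are the $\PSL_2$ rows.

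The heart of the argument is the single infinite family $\PSL_2(q)$. Here the bound forces $q+1\leqslant60$, so $q\leqslant59$, and I would use Dickson's description of the subgroups of $\PSL_2(q)$ together with the embedding criterion for $\A_5$ (namely $q=5^f$ or $q\equiv\pm1\pmod{10}$). The inequality $|K|\geqslant|L|/60$ shows that a factor $K$ of index at most $60$ must be large: for $q$ not tiny the only possibilities are a Borel subgroup (index $q+1$) or its unipotent radical (index $|L|/q$), so the factorization condition $|L|=60\,|K|/|H\cap K|$ becomes an arithmetic constraint on $q$. Solving it should isolate exactly $q\in\{11,19,29,59\}$ with the stated factors, producing rows~5--8 of Table~\ref{tab1}, while the genuinely alternating cases ($\A_6\cong\PSL_2(9)$, etc.) fall under parts~(a) and~(b) of Theorem~\ref{thm1} and row~1. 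I expect the chief obstacle to be precisely this $\PSL_2(q)$ bookkeeping: tracking the conjugacy classes of $\A_5$ in $\PSL_2(q)$, computing the intersections $H\cap K$, and confirming that no borderline small value of $q$ yields an unexpected extra factorization, so that the list is exactly rows~5--8.
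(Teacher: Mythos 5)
Your proposal reaches the right classification, but by a genuinely different route from the paper, and one step of your $\PSL_2(q)$ analysis is incorrect as stated.

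On the route: the paper's proof is a short lookup. Taking a maximal subgroup $M$ of $L$ containing $K$, the factorization $L=HM$ gives $|L{:}M|=|H{:}H\cap M|$, which \emph{divides} $60$; hence $L$ acts faithfully and primitively with degree dividing $60$, and once Lemma~\ref{lem2} reduces to classical $L$, the table of primitive groups of small degree in \cite[Appendix~B]{DM} immediately returns $L=\PSL_2(p)$, $p\in\{11,19,29,59\}$, with $M=\Z_p{:}\Z_{(p-1)/2}$; the possibilities for $K$ then follow from divisibility plus one conjugacy argument. You instead pass to a maximal factorization $L=\widetilde{H}M$ and invoke the maximal factorization tables of \cite{LPS1990} (note the correct reference for classical socle is Theorem~A with Tables~1--4; Theorem~B there concerns exceptional groups), and you must then settle the infinite family $\PSL_2(q)$ by Dickson's theorem. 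That is workable, and it parallels the descent the paper reserves for its main induction (Hypothesis~\ref{hyp1}), but it is strictly more work: the paper's divisibility observation makes everything finite at once. The divisibility $|L{:}M|\di 60$, not merely the bound $\leqslant 60$, is available to you too, since $|L{:}M|$ divides $|L{:}K|$ and every index in your set divides $60$; with only the bound $\leqslant 60$, either lookup admits spurious candidates (e.g.\ $\A_5$ of index $57$ in $\PSL_2(19)$, or $\PSp_4(3)$ of degree $27$, $36$, $40$, $45$).

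The concrete flaw: you assert that for $q$ not tiny a factor of index at most $60$ must be ``a Borel subgroup (index $q+1$) or its unipotent radical (index $|L|/q$)''. This is false, and taking it literally loses part of Table~\ref{tab1}: in $\PSL_2(29)$ the subgroup $29{:}7$ of index $60$ --- a proper subgroup of the Borel $29{:}14$, and not the unipotent radical $\Z_{29}$, which has index $420$ --- is a factor, giving the second entry of row~7. What Dickson's theorem actually yields for $11\leqslant q\leqslant 59$ is that every subgroup of index dividing $60$ lies inside a Borel subgroup (the dihedral, subfield, $\A_4$, $\Sy_4$ and $\A_5$ subgroups all have index exceeding, or not dividing, $60$), whence $q+1$ divides $60$ and the candidates are exactly $11{:}5$, $\Z_{11}$, $19{:}9$, $19{:}3$, $29{:}14$, $29{:}7$, $59{:}29$. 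Finally, order arithmetic alone cannot eliminate $K=19{:}3$: there $|H||K|=|L|$, so Lemma~\ref{lem6}(c)--(d) is consistent with an exact factorization, and one needs the paper's argument that all subgroups of order $3$ in $\PSL_2(19)$ are conjugate, so that by Lemma~\ref{lem6}(b) one may replace $K$ by a conjugate meeting $H$ nontrivially, a contradiction. Your remark about ``computing the intersections $H\cap K$'' gestures at this, but that conjugacy step is the one place where an idea beyond bookkeeping is required, and your write-up should make it explicit.
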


\proof
Let $M$ be a maximal subgroup of $L$ containing $K$. It derives from the factorization $L=HK$ that $L=HM$, and so $L$ acts primitively on the set $[L{:}M]$ of right cosets of $M$ in $L$. By Lemma~\ref{lem6}, $|L|/|M|$ divides $|H|=60$. Hence $L$ is a primitive permutation group of degree dividing $60$, and thus lies in \cite[Appendix~B]{DM}. Due to Lemma~\ref{lem2} we may assume that $L$ is a classical group, whence $L=\PSL_2(p)$ with $p\in\{11,19,29,59\}$ and $M=\Z_p{:}\Z_{(p-1)/2}$.

If $p=11$, then $|K|$ is divisible by $|L|/|H|=11$ and so $K=\Z_{11}$ or $\Z_{11}{:}\Z_5$ as in row~5 of Table~\ref{tab1}. Similarly, the cases $p=29$ and $p=59$ lead to rows~7 and~8 of Table~\ref{tab1}. Now assume that $p=19$. Then $|K|$ is divisible by $|L|/|H|=19\cdot3$. Since $\PSL_2(19)$ has only one conjugacy class of subgroups of order $3$, we exclude by Lemma~\ref{lem6} the possibility for $K=\Z_{19}{:}\Z_3$. Consequently, $K=\Z_{19}{:}\Z_9$ as in row~6 of Table~\ref{tab1}.
\qed

In order to prove Theorem~\ref{thm1}, we analyze its minimal counterexample in the rest of this section. To be precise, make the hypothesis as follows.

\begin{hypothesis}\label{hyp1}
Suppose that $(L,H,K)$ is a counterexample to Theorem~\ref{thm1} with minimal $|L|$. Let $A$ and $B$ be maximal subgroups of $L$ containing $H$ and $K$, respectively.
\end{hypothesis}

We remark that, under Hypothesis~\ref{hyp1}, $L$ is a classical simple group not isomorphic to $\A_5$, $\A_6$ or $\A_8$ due to Lemma~\ref{lem2}, and interchanging $A$ and $B$ if necessary, the triple $(L,A,B)$ lies in Tables~1--4 of \cite{LPS1990}.

\begin{lemma}\label{lem3}
Under \emph{Hypothesis~\ref{hyp1}}, $A$ has exactly one nonabelian composition factor.
\end{lemma}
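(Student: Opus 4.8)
The plan is to prove the existence of one nonabelian composition factor directly, and then to rule out a second. Since $H\cong\A_n$ with $n\geqslant6$ (Lemma~\ref{lem5} disposes of the case $n=5$) is a nonabelian simple subgroup of the classical group factor $A$, the group $A$ is insoluble and so has at least one nonabelian composition factor; I would assume for contradiction that it has at least two. The first step is to record the factorization of $A$ induced by $L=HK$: if $a\in A$ and $a=hk$ with $h\in H$ and $k\in K$, then $k=h^{-1}a\in A$, so $k\in A\cap K\leqslant A\cap B$ and hence $A=H(A\cap B)$, a factorization whose first factor is the simple group $H\cong\A_n$.

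Next I would restate the conclusion in terms of the socle. Writing $R=\Rad(A)$ and $C=A\cap B$, the group $A/R$ has trivial soluble radical, so $\Soc(A/R)=T_1\times\cdots\times T_r$ is a direct product of nonabelian simple groups, and $A$ has exactly one nonabelian composition factor precisely when $r=1$, i.e.\ when $\Soc(A/R)$ is simple. Because $H$ is nonabelian simple we have $H\cap R=1$, so Lemma~\ref{lem1} applied to $A=HC$ with the normal subgroup $R$ yields a factorization $A/R=\overline{H}\,\overline{C}$ with $\overline{H}\cong\A_n$. This constrains how $\A_n$ is distributed among the factors $T_i$: being simple, $\overline{H}$ meets $\Soc(A/R)$ either trivially or in the whole of $\overline{H}$, and I would carry both alternatives into the analysis below, noting that in the first alternative $\overline{H}$ must embed into the factor-permutation group attached to $\Soc(A/R)$, while in the second $\overline{H}$ projects isomorphically into some $T_i$, so that $\A_n$ embeds in $T_i$. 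Under the contradiction hypothesis $r\geqslant2$.

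The decisive step is to contradict $r\geqslant2$ by invoking the global structure of the maximal factorization $L=AB$. By the remark following Hypothesis~\ref{hyp1}, the triple $(L,A,B)$ appears in Tables~1--4 of~\cite{LPS1990}, so I would isolate exactly those entries in which the factor $A$ containing $H$ can have two or more nonabelian composition factors---namely the imprimitive ($\mathcal{C}_2$), tensor ($\mathcal{C}_4$) and tensor-induced ($\mathcal{C}_7$) subgroups, together with the reducible ($\mathcal{C}_1$) subgroups whose Levi complement has two insoluble classical sections---and treat each shape in turn. For each I would combine the embedding $\A_n\leqslant A$ (with $n\geqslant6$) and the order identity $|L|\,|A\cap B|=|A||B|$ of Lemma~\ref{lem6} with the distribution of $\A_n$ across the $T_i$ obtained above, so that the divisibility and index conditions read off from the table orders become incompatible with housing an $\A_n$ inside a genuinely product-type factor. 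The main obstacle is precisely this case analysis: one must verify that no maximal factorization of a classical simple group listed in~\cite{LPS1990} admits an $\A_n$ with $n\geqslant6$ inside a factor possessing two or more nonabelian composition factors. Everything preceding it is structural bookkeeping, and once the product-type factors are excluded the remaining factors are almost simple modulo their soluble radical, forcing $r=1$ as required.
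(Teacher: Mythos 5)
Your preparatory steps are correct and essentially coincide with the paper's: the induced factorization $A=H(A\cap B)$ follows from $L=HK$ and $H\leqslant A$; the passage to $A/\Rad(A)$ via Lemma~\ref{lem1} is valid since $H\cap\Rad(A)=1$; and you are entitled to assume $n\geqslant 6$ by Lemma~\ref{lem5}. The problem is that your ``decisive step'' is not a proof but a deferral of the whole lemma: you reduce everything to the claim that no entry of Tables~1--4 of \cite{LPS1990} whose factor $A$ has two or more nonabelian composition factors can house an $\A_n$, $n\geqslant6$, satisfying $L=HB$, and then assert that ``divisibility and index conditions'' will dispose of each entry, without carrying out a single case. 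That claim is at least as strong as the assertion of Lemma~\ref{lem3} itself (indeed strictly stronger: the lemma only needs every such configuration to be a non-counterexample, not to be nonexistent), so as written nothing has been established.

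Moreover, the tools you name are demonstrably inadequate for the list you would have to process. Consider row~1 of Table~\ref{tab2} with $(\ell,f)=(2,1)$, that is, $(L,A,B)=(\Sp_8(2),\Sp_4(2)\wr\Sy_2,\SO_8^-(2))$: here $\A_6\cong\Sp_4(2)'$ embeds in a direct factor of $A$, and $|L|/|B|=2^3\cdot3\cdot5=120$ divides $|\A_6|=360$, so both your embedding test and the order criterion of Lemma~\ref{lem6} are passed. Excluding this case requires showing that no $\A_6\leqslant\Sy_6\wr\Sy_2$ is transitive on the $120$ cosets of $\SO_8^-(2)$ (any such $\A_6$ lies in the base group, hence is a factor subgroup or a diagonal subgroup, and its orbits on the cosets have length at most $60$) --- an argument of a completely different nature from the one you propose. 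The paper avoids having to prove such nonexistence statements cold by using the ingredient you never invoke: the minimality in Hypothesis~\ref{hyp1}. Applying Lemma~\ref{lem1} down a composition series of $A$ yields a factorization $L_1=H_1K_1$ of a nonabelian composition factor $L_1$ of $A$ with $H_1\cong H$; since $|L_1|<|L|$, either $H_1=L_1$ or $(L_1,H_1,K_1)$ satisfies the conclusion of Theorem~\ref{thm1}, and comparison with Table~\ref{tab1} then forces $H\cong\A_5$ in each row of Table~\ref{tab2}, contradicting Lemma~\ref{lem5}. Without this inductive use of the minimal counterexample, your case analysis is left proving a strictly stronger statement with strictly weaker tools, and in the case exhibited above those tools fail.
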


\proof
Apparently, $A$ has at least one nonabelian composition factor. Suppose for a contradiction that $A$ has at least two nonabelian composition factors. Then inspecting Tables~1--4 of \cite{LPS1990}, we obtain Table~\ref{tab2} for candidates of $(L,A,B)$, where $a\leqslant2$ and $\N_1$ is the stabilizer of a non-isotropic $1$-dimensional subspace.

\begin{table}[htbp]
\caption{}\label{tab2}
\centering
\begin{tabular}{|l|l|l|l|}
\hline
row & $L$ & $A$ & $B$\\
\hline
1 & $\Sp_{4\ell}(2^f)$, $f\ell\geqslant2$ & $\Sp_{2\ell}(2^f)\wr\Sy_2$ & $\SO_{4\ell}^-(2^f)$\\
2 & $\Sp_4(2^f)$, $f\geqslant2$ & $\SO_4^+(2^f)$ & $\Sp_2(4^f).2$\\
3 & $\Sp_4(2^f)$, $f\geqslant3$ odd & $\SO_4^+(2^f)$ & $\Sz(2^f)$\\
4 & $\Sp_6(2^f)$, $f\geqslant2$ & $\Sp_2(2^f)\times\Sp_4(2^f)$ & $\G_2(2^f)$\\
\hline
5 & $\POm_{4\ell}^+(q)$, $\ell\geqslant3$, $q\geqslant4$ & $(\PSp_2(q)\times\PSp_{2\ell}(q)).a$ & $\N_1$\\
6 & $\POm_8^+(q)$, $q\geqslant5$ odd & $(\PSp_2(q)\times\PSp_4(q)).2$ & $\Omega_7(q)$\\
7 & $\Omega_8^+(2)$ & $(\SL_2(4)\times\SL_2(4)).2^2$ & $\Sp_6(2)$\\
\hline
\end{tabular}
\end{table}

From the inclusion $H\leqslant A$ and factorization $L=AB$ we deduce that $A=H(K\cap B)$. By Lemma~\ref{lem1}, there exists a nonabelian composition factor $L_1$ of $A$ such that $L_1=H_1K_1$ for some subgroups $H_1$ and $K_1$ of $L_1$ with $H_1\cong H$. Since $|L_1|\leqslant|A|<|L|$, Hypothesis~\ref{hyp1} implies that either $H_1=L_1$ or the triple $(L_1,H_1,K_1)$ satisfies the conclusion of Theorem~\ref{thm1}. This yields the following statements.
\begin{itemize}
\item[(a)] If $(L,A,B)$ lies in row~1 of Table~\ref{tab2}, then $(\ell,f)=(1,2)$ and $H\cong H_1=L_1\cong\A_5$.
\item[(b)] If $(L,A,B)$ lies in row~2 of Table~\ref{tab2}, then $f=2$ and $H\cong H_1=L_1\cong\A_5$.
\item[(c)] If $(L,A,B)$ lies in row~5 of Table~\ref{tab2}, then $q=4$ or $5$ and $H\cong H_1=L_1\cong\A_5$.
\item[(d)] If $(L,A,B)$ lies in row~6 of Table~\ref{tab2}, then $q=5$ and $H\cong H_1=L_1\cong\A_5$.
\item[(e)] If $(L,A,B)$ lies in row~7 of Table~\ref{tab2}, then $H\cong H_1=L_1\cong\A_5$.
\item[(f)] $(L,A,B)$ does not lie in rows~3 and~4 of Table~\ref{tab2}.
\end{itemize}
Now as we always have $H\cong\A_5$, the triple satisfies the conclusion of Theorem~\ref{thm1} by Lemma~\ref{lem5}, violating Hypothesis~\ref{hyp1}.
\qed

\begin{lemma}\label{lem4}
Under \emph{Hypothesis~\ref{hyp1}}, the unique nonabelian composition factor of $A$ is not an alternating group.
\end{lemma}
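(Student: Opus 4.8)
The plan is to push the reduction of Lemma~\ref{lem3} one level further and then exploit the minimality in Hypothesis~\ref{hyp1}. Write $S$ for the unique nonabelian composition factor of $A$, which exists and is unique by Lemma~\ref{lem3}, and suppose for contradiction that $S\cong\A_m$ for some $m\geqslant5$. Exactly as in the proof of Lemma~\ref{lem3}, the inclusion $H\leqslant A$ together with $L=HK$ and Dedekind's modular law gives $A=H(A\cap K)$, a factorization of $A$ with a factor isomorphic to $H$. Since $H\cong\A_n$ is simple, Lemma~\ref{lem1} produces a nonabelian composition factor of $A$ admitting a factorization one factor of which is isomorphic to $H$; by the uniqueness in Lemma~\ref{lem3} this composition factor must be $S$. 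Thus I would obtain a factorization $\A_m=H_1K_1$ with $H_1\cong H\cong\A_n$.

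Next I would feed this back into the minimality hypothesis. As $S\leqslant A<L$ we have $|\A_m|<|L|$, so $(\A_m,H_1,K_1)$ is not a counterexample and therefore satisfies the conclusion of Theorem~\ref{thm1}, either by minimality of $|L|$ or, when $m\notin\{5,6,8\}$, directly by Lemma~\ref{lem2}; this is legitimate since $H_1\cong\A_n$ with $n\geqslant5$. Reading off the three possibilities then constrains $m$ sharply in terms of $n$: either $m=n+k$ for some $k\in\{1,\dots,5\}$ with $K_1$ being $k$-transitive, or $m$ is the index of a subgroup of $\A_n$ with $K_1\cong\A_{m-1}$, or $(m,n)\in\{(6,5),(10,6),(15,7)\}$. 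In particular $H_1\cong\A_n$ is literally a subgroup of $\A_m$, so $n\leqslant m$, and outside the index case the value of $m$ is bounded in terms of $n$.

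It remains to pin down the finitely many admissible $(L,A)$. Scanning Tables~1--4 of~\cite{LPS1990} for a maximal subgroup $A$ of a classical simple group $L\neq\A_5,\A_6,\A_8$ whose unique nonabelian composition factor is alternating leaves only a short list of low-rank groups over small fields, such as $\Sy_8<\Sp_6(2)$ (so $m=8$), $\A_9<\Omega_8^+(2)$ ($m=9$), $\Sy_{10}<\Sp_8(2)$ ($m=10$) and $\Sy_{12}<\Omega_{10}^-(2)$ ($m=12$), together with the analogous cases over $\GF(3)$ and those coming from exceptional isomorphisms. Each entry fixes both $L$ and $m$. I would then combine the $(m,n)$-constraint above with the transitivity condition coming from $L=HB$, which holds because $L=HK$ and $K\leqslant B$: by Lemma~\ref{lem6}, $H\cong\A_n$ acts transitively on $[L{:}B]$ and $|L{:}B|$ divides $n!/2$. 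In each case this is either numerically impossible, or forces $n=5$ so that Lemma~\ref{lem5} applies, or identifies $L$ with one of the groups whose factorizations are completely recorded in Table~\ref{tab1}; any of these contradicts Hypothesis~\ref{hyp1}.

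I expect the main obstacle to be the third step: assembling the complete list of maximal subgroups in~\cite{LPS1990} with a single alternating composition factor, and dispatching the borderline groups defined over $\GF(2)$ and $\GF(3)$ — precisely the classical groups that already populate Table~\ref{tab1} — where the numerology is tight. Particular care is needed with the exceptional isomorphisms $\A_5\cong\PSL_2(4)$, $\A_6\cong\PSL_2(9)$ and $\A_8\cong\PSL_4(2)$, so that the alternating factor is not accidentally overlooked or double-counted when it coincides with a classical group.
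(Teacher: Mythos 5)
Your first two steps are sound and even slightly sharper than the paper's own: the paper's proof of this lemma only uses the embedding $H\hookrightarrow\A_m$ (giving $6\leqslant n\leqslant m$, where $n\geqslant6$ comes from Lemma~\ref{lem5}), whereas you extract a genuine factorization $\A_m=H_1K_1$ and feed it back into the minimality hypothesis. (Be careful there: Lemma~\ref{lem1} may return $H_1=\A_m$ or $K_1=\A_m$, in which case Theorem~\ref{thm1} does not literally apply and you only retain $n\leqslant m$; the paper words this as ``either $H_1=L_1$ or \dots''.) The real problem is your endgame. Under Hypothesis~\ref{hyp1} the triple $(L,H,K)$ is a counterexample, so in each surviving candidate $(L,A,B)$ from Tables~1--4 of \cite{LPS1990} what must be shown is that the \emph{given} factorization $L=HK$ --- with $K$ an unknown subgroup of $B$ --- satisfies conclusion (a), (b) or (c) of Theorem~\ref{thm1}. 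Your final move, ``identifies $L$ with one of the groups whose factorizations are completely recorded in Table~\ref{tab1}; any of these contradicts Hypothesis~\ref{hyp1}'', assumes exactly this: that Table~\ref{tab1} already records all factorizations of those groups with an alternating factor. That is circular --- Table~\ref{tab1} is part of the conclusion being proved, and the hypothetical counterexample is precisely a factorization of such an $L$ that is \emph{not} yet known to appear there.

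What the paper does at this point, and what your proposal is missing, is the actual determination of the possible $K$: for each candidate surviving the divisibility test $|L|/|B|\di n!/2$ (rows 1, 9, 15--17, 19, 20 and 22--24 of its Table~\ref{tab3}), it either pins down $K$ by a structural argument --- e.g.\ for $L=\PSL_4(3)$, $B=3^3{:}\PSL_3(3)$, writing $B=R{:}Q$ one shows $K\cap R\geqslant\Z_3$, then $KR/R=\PSL_3(3)$ by order divisibility, then $K=B$ using the irreducibility of $\PSL_3(3)$ on $\Z_3^3$ --- or runs an explicit \magma search over subgroups $K<B$ with $L=HK$ for each admissible $n$, verifying in every case that $(L,n,K)$ lands in Table~\ref{tab1}, which is what finally contradicts Hypothesis~\ref{hyp1}. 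Without this case-by-case verification (computational or otherwise) the contradiction is not established; your own closing paragraph correctly identifies this as the main obstacle, but the proposal as written resolves it by assuming the answer.
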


\proof
Suppose on the contrary that the nonabelian composition factor of $A$ is an alternating group, say $\A_m$, where $m\geqslant5$. Then in view of the isomorphisms
$$
\PSL_2(4)\cong\PSL_2(5)\cong\A_5,\quad\PSL_2(9)\cong\A_6,\quad\PSL_4(2)\cong\A_8\quad\text{and}\quad\Sp_4(2)\cong\Sy_6,
$$
an inspection of Tables~1--4 of \cite{LPS1990} shows that $(L,A,B)$ lies in Table~\ref{tab3}.

\begin{table}[htbp]
\caption{}\label{tab3}
\centering
\begin{tabular}{|l|l|l|l|}
\hline
row & $L$ & $A$ & $B$\\
\hline
1 & $\PSL_4(3)$ & $(4\times\A_6){:}2$ & $3^3{:}\PSL_3(3)$ \\
2 & $\PSL_3(5)$ & $5^2{:}\GL_2(5)$ & $31{:}3$ \\
3 & $\PSL_3(9)$ & $3^4{:}\GL_2(9)$ & $91{:}3$ \\
4 & $\PSL_5(2)$ & $2^4{:}\A_8$ & $31{:}5$ \\
5 & $\PSL_2(11)$ & $\A_5$ & $11{:}5$ \\
6 & $\PSL_2(19)$ & $\A_5$ & $19{:}9$ \\
7 & $\PSL_2(29)$ & $\A_5$ & $29{:}14$ \\
8 & $\PSL_2(59)$ & $\A_5$ & $59{:}29$ \\
\hline
9 & $\PSp_4(3)$ & $\Sy_6$ & $3_+^{1+2}{:}2.\A_4$ \\
10 & $\PSp_4(3)$ & $2^4{:}\A_5$ & $3_+^{1+2}{:}2.\A_4$, $3^3{:}\Sy_4$ \\
11 & $\Sp_4(4)$ & $2^6{:}(3\times\A_5)$ & $\PSL_2(16){:}2$ \\
12 & $\PSp_4(5)$ & $5_+^{1+2}{:}4.\A_5$ & $\PSL_2(25){:}2$ \\
13 & $\PSp_4(9)$ & $3^{2+4}{:}8.\A_6$ & $\PSL_2(81){:}2$ \\
14 & $\Sp_6(2)$ & $2^5{:}\Sy_6$ & $\SL_2(8){:}3$ \\
15 & $\Sp_6(2)$ & $\Sy_8$ & $\SL_2(8){:}3$, $\PSU_4(2){:}2$ \\
16 & $\Sp_6(2)$ & $\Sy_8$, $2^5{:}\Sy_6$, $\Sy_3\times\Sy_6$ & $\PSU_3(3){:}2$ \\
17 & $\Sp_8(2)$ & $\Sy_{10}$ & $\SO_8^-(2)$ \\
\hline
18 & $\PSU_4(3)$ & $3^4{:}\A_6$ & $\PSU_3(3)$ \\
19 & $\PSU_3(5)$ & $\A_7$ & $5_+^{1+2}{:}8$ \\
\hline
20 & $\Omega_7(3)$ & $\Sy_9$ & $3^{3+3}{:}\PSL_3(3)$, $\PSL_4(3){:}2$, $\G_2(3)$ \\
21 & $\Omega_7(3)$ & $2^6{:}\A_7$ & $\G_2(3)$ \\
\hline
22 & $\Omega_{10}^-(2)$ & $\A_{12}$ & $2^8{:}\Omega_8^-(2)$ \\
\hline
23 & $\Omega_8^+(2)$ & $2^6{:}\A_8$ & $\Sp_6(2)$, $(3\times\PSU_4(2)){:}2$, $\A_9$ \\
24 & $\Omega_8^+(2)$ & $\A_9$ & $\Sp_6(2)$, $2^6{:}\A_8$, $(3\times\PSU_4(2)){:}2$ \\
\hline
\end{tabular}
\end{table}

Notice that $H$ is isomorphic to a subgroup of $\A_m$ and $H\ncong\A_5$ by Lemma~\ref{lem5}, whence $6\leqslant n\leqslant m$. Therefore, we only need to consider rows~1, 3, 4, 9 and~13--24 of Table~\ref{tab3}. Since $|L|$ divides $|H||K|$ by Lemma~\ref{lem6}, one deduces that $|L|/|B|$ divides $m!/2$, which excludes rows~3, 4, 13, 14, 18 and~21. We next analyze the remaining rows, i.e., rows~1, 9, 15--17, 19, 20 and~22--24.

Assume that row~1 of Table~\ref{tab3} appears. Then $m=6$, and hence $n=6$. It follows that $|K|$ is divisible by $|L|/|H|=|\PSL_4(3)|/|\A_6|=2^4\cdot3^4\cdot13$. Write $B=R{:}Q$ with $R=\Z_3^3$ and $Q=\PSL_3(3)$. Then $K\cap R\geqslant\Z_3$ as $3^4$ does not divide $|Q|$, and $KR/R$ is a subgroup of $\PSL_3(3)$ with order divisible by $2^4\cdot13$, which forces $KR/R=\PSL_3(3)$. Since here $\PSL_3(3)$ acts irreducibly on $\Z_3^3$, we conclude that $K=B=3^3{:}\PSL_3(3)$, as in row~9 of Table~\ref{tab1}.

Assume that row~9 of Table~\ref{tab3} appears. Then $n=m=6$, and searching in \magma~\cite{magma} for the factor corresponding $K$ leads to row~11 of Table~\ref{tab1}.

If $(L,A,B)$ lies in row~15 or~16 of Table~\ref{tab3}, then $6\leqslant n\leqslant8$. Setting $n=6$, $7$, $8$, respectively, and searching in \magma~\cite{magma} for the corresponding subgroup $K<\Sp_6(2)$ satisfying $L=HK$ shows that one of rows~12--14 of Table~\ref{tab1} appears.

Assume that row~17 of Table~\ref{tab3} appears. In this case, $6\leqslant n\leqslant10$. However, one can quickly verifies by \magma~\cite{magma} that there is no factorization $L=HB$ with $(L,H,B)=(\Sp_8(2),\A_n,\SO_8^-(2))$ for $6\leqslant n\leqslant9$. As a consequence, $n=10$. Then searching in \magma~\cite{magma} for the corresponding factor $K$ leads to row~19 of Table~\ref{tab1}.

Assume that row~19 of Table~\ref{tab3} appears. Then $n!/2=|H|$ is divisible by $|L|/|B|=126$. This in conjunction with the inequality $n\leqslant m=7$ yields $n=7$. Now searching in \magma~\cite{magma} for the corresponding factor $K$ one sees that $(L,H,K)$ lies in row~10 of Table~\ref{tab1}.

Let $(L,A,B)$ be as described in row~20 of Table~\ref{tab3}. Then $n\leqslant m=9$, and $n!/2=|H|$ is divisible by $|L|/|B|$. If $B=\PSL_4(3){:}2$ or $\G_2(3)$, then this forces $n=9$, and searching in \magma~\cite{magma} for the corresponding factor $K$ leads to row~21 of Table~\ref{tab1}. If $B=3^{3+3}{:}\PSL_3(3)$, then $n=8$ or $9$, and searching in \magma~\cite{magma} for the corresponding factor $K$ leads to row~20 or~21 of Table~\ref{tab1}.

Now consider row~22 of Table~\ref{tab3}. In this case, $6\leqslant n\leqslant12$, and $n!/2=|H|$ is divisible by $|L|/|B|=3^2\cdot5\cdot11$. However, one can quickly verifies by \magma~\cite{magma} that there is no factorization $L=HB$ with $(L,H,B)=(\Omega_{10}^-(2),\A_{11},2^8{:}\Omega_8^-(2))$. As a consequence, $n=12$. Then searching in \magma~\cite{magma} for the corresponding factor $K$ leads to row~28 of Table~\ref{tab1}.

Finally, assume that $(L,A,B)$ lies in row~23 or~24 of Table~\ref{tab3}. Then we have $6\leqslant n\leqslant9$. Setting $n=6$, $7$, $8$, $9$, respectively, and searching in \magma~\cite{magma} for the corresponding subgroup $K<\Omega_8^+(2)$ satisfying $L=HK$ shows that one of rows~22--25 of Table~\ref{tab1} appears.
\qed

Now we are able to complete the proof for Theorem~\ref{thm1}.

\noindent\textbf{Proof of Theorem~\ref{thm1}:} Suppose Hypothesis~\ref{hyp1}. Then $A$ has a unique nonabelian composition factor $L_1$ by Lemma~\ref{lem3}, and $L_1$ is not an alternating group according to Lemma~\ref{lem4}. From the inclusion $H\leqslant A$ and factorization $L=AB$ we deduce that $A=H(K\cap B)$. Hence by Lemma~\ref{lem1}, $L_1=H_1K_1$ for some subgroups $H_1$ and $K_1$ of $L_1$ with $H_1\cong H$. Since $|L_1|\leqslant|A|<|L|$, Hypothesis~\ref{hyp1} implies that either $H_1=L_1$ or the triple $(L_1,H_1,K_1)$ satisfies the conclusion of Theorem~\ref{thm1}. Moreover, $L_1$ is not an alternating group, and by Lemma~\ref{lem5}, $H_1\ncong\A_5$. Then one sees from Table~\ref{tab1} that $L_1$ is one of the groups:
$$
\PSL_4(3),\ \PSU_3(5),\ \PSp_4(3),\ \Sp_6(2),\ \Sp_8(2),\ \Omega_7(3),\ \Omega_8^+(2),\ \POm_8^+(3),\ \Omega_{10}^-(2).
$$
Inspecting Tables~1--4 of \cite{LPS1990}, we thereby deduce that $(L,A,B)$ lies in Table~\ref{tab4}, where $a\leqslant2$.

\begin{table}[htbp]
\caption{}\label{tab4}
\centering
\begin{tabular}{|l|l|l|l|}
\hline
row & $L$ & $A$ & $B$\\
\hline
1 & $\PSL_5(3)$ & $3^4{:}\GL_4(3)$ & $121{:}5$ \\
2 & $\PSL_4(3)$ & $\PSp_4(3){:}2$ & $3^3{:}\PSL_3(3)$ \\
3 & $\PSL_6(2)$ & $\Sp_6(2)$ & $2^5{:}\PSL_5(2)$ \\
4 & $\PSL_8(2)$ & $\Sp_8(2)$ & $2^7{:}\PSL_7(2)$ \\
\hline
5 & $\PSp_6(3)$ & $3_+^{1+4}{:}2.\PSp_4(3)$ & $\PSL_2(27){:}3$, $\PSL_2(13)$ \\
6 & $\Sp_8(2)$ & $2^7{:}\Sp_6(2)$, $\SO_8^+(2)$, $\Sy_3\times\Sp_6(2)$ & $\Sp_4(4){:}2$ \\
7 & $\Sp_8(2)$ & $\SO_8^+(2)$ & $\SO_8^-(2)$, $\PSL_2(17)$ \\
8 & $\Sp_{10}(2)$ & $2^9{:}\Sp_8(2)$ & $\SL_2(32){:}5$ \\
9 & $\Sp_{10}(2)$ & $\SO_{10}^-(2)$ & $\SL_2(32){:}5$, $2^{15}{:}\SL_5(2)$, $\SO_{10}^+(2)$ \\
\hline
10 & $\PSU_4(5)$ & $\SU_3(5){:}3$ & $5^4{:}\PSL_2(25).4$, $\PSp_4(5){:}2$ \\
11 & $\PSU_4(3)$ & $\PSp_4(3)$ & $\PSU_3(3)$, $\PSL_3(4)$ \\
12 & $\PSU_6(2)$ & $\Sp_6(2)$ & $\PSU_5(2)$ \\
13 & $\PSU_8(2)$ & $\Sp_8(2)$ & $\GU_7(2)$ \\
\hline
14 & $\Omega_9(3)$ & $3^{6+4}{:}\SL_4(3)$ & $\Omega_8^-(3).2$ \\
\hline
\multirow{2}*{15} & \multirow{2}*{$\Omega_7(3)$} & $3^5{:}\PSp_4(3){:}2$, $\PSL_4(3){:}2$, & \multirow{2}*{$\G_2(3)$} \\
 &  & $(2^2\times\PSp_4(3)){:}2$, $\Sp_6(2)$ & \\
\hline
16 & $\Omega_7(3)$ & $\PSL_4(3){:}2$ & $\Sy_9$ \\
\hline
\multirow{2}*{17} & \multirow{2}*{$\POm_8^+(3)$} & \multirow{2}*{$\Omega_7(3)$} & $\Omega_7(3)$, $2.\PSU_4(3).2^2$, $\Omega_8^+(2)$, \\
 &  &  & $3^6{:}\PSL_4(3)$, $(\A_4\times\PSp_4(3)){:}2$ \\
\hline
18 & $\POm_8^+(3)$ & $3^6{:}\PSL_4(3)$ & $\Omega_7(3)$, $2.\PSU_4(3).2^2$, $\Omega_8^+(2)$ \\
19 & $\POm_8^+(3)$ & $(\A_4\times\PSp_4(3)){:}2$ & $\Omega_7(3)$ \\
20 & $\POm_8^+(3)$ & $\Omega_8^+(2)$ & $\Omega_7(3)$, $3^6{:}\PSL_4(3)$ \\
\hline
21 & $\Omega_8^+(2)$ & $\Sp_6(2)$ & $\A_9$, $(\A_5\times\A_5){:}2^2$ \\
22 & $\Omega_8^+(2)$ & $(3\times\PSp_4(3)){:}2$ & $\A_9$ \\
\hline
23 & $\Omega_{16}^+(2)$ & $\Sp_8(2).a$ & $\Sp_{14}(2)$ \\
\hline
\end{tabular}
\end{table}

To finish the proof, we analyze the rows of Table~\ref{tab4} in order.

\underline{Row~1.} In this case, $L_1=\PSL_4(3)$. Then as $(L_1,H_1,K_1)$ satisfies the conclusion of Theorem~\ref{thm1}, $H\cong H_1\cong\A_6$. However, $|L|$ is divisible by $13$ while $|B|$ is not. We thereby deduce from the factorization $L=HB$ that $|H|$ is divisible by $13$, a contradiction.

\underline{Row~2.} Here $L_1=\PSp_4(3)$, and as $(L_1,H_1,K_1)$ satisfies the conclusion of Theorem~\ref{thm1}, $H\cong H_1\cong\A_6$. It follows that $|K|$ is divisible by $|L|/|H|=|\PSL_4(3)|/|\A_6|=2^4\cdot3^4\cdot13$. Write $B=R{:}Q$ with $R=\Z_3^3$ and $Q=\PSL_3(3)$. Then $K\cap R\geqslant\Z_3$ as $3^4$ does not divide $|Q|$, and $KR/R$ is a subgroup of $\PSL_3(3)$ with order divisible by $2^4\cdot13$, which forces $KR/R=\PSL_3(3)$. Since here $\PSL_3(3)$ acts irreducibly on $\Z_3^3$, we conclude that $K=B=3^3{:}\PSL_3(3)$, as in row~11 of Table~\ref{tab1}.

\underline{Row~3.} As $(L_1,H_1,K_1)$ satisfies the conclusion of Theorem~\ref{thm1}, where $L_1=\Sp_6(2)$, we have $6\leqslant n\leqslant8$. However, one can quickly verifies by \magma~\cite{magma} that there is no factorization $L=HB$ with $(L,H,B)=(\PSL_6(2),\A_n,2^5{:}\PSL_5(2))$ for $6\leqslant n\leqslant8$.

\underline{Row~4.} Here $L_1=\Sp_8(2)$, and since $(L_1,H_1,K_1)$ satisfies the conclusion of Theorem~\ref{thm1} we have $6\leqslant n\leqslant10$. However, $|L|$ is divisible by $17$ while $|B|$ is not. We then deduce from the factorization $L=HB$ that $|H|$ is divisible by $17$, a contradiction.

\underline{Row~5.} In this case, $L_1=\PSp_4(3)$. Then as $(L_1,H_1,K_1)$ satisfies the conclusion of Theorem~\ref{thm1}, $H\cong H_1\cong\A_6$. Since $|L|/|B|$ is divisible by $3^5$, we deduce from the factorization $L=HB$ that $|H|$ is divisible by $3^5$, a contradiction.

\underline{Rows~6 and~7.} For these two rows, $L=\Sp_8(2)$ and $L_1=\Sp_6(2)$ or $\Omega_8^+(2)$. If $L_1=\Sp_6(2)$, then $B=\Sp_4(4){:}2$ and $6\leqslant n\leqslant8$ since $(L_1,H_1,K_1)$ satisfies the conclusion of Theorem~\ref{thm1}. As $|L|/|B|=2^7\cdot3^3\cdot7$ does not divide $8!/2$, we thereby exclude the possibility for $L_1=\Sp_6(2)$. Consequently, $L_1=\Omega_8^+(2)$, and so $6\leqslant n\leqslant9$ since $(L_1,H_1,K_1)$ satisfies the conclusion of Theorem~\ref{thm1}. Now $|L|/|B|$ divides $|H|$ and thus divides $9!/2$, which yields that $B=\SO_8^-(2)$. Setting $n=6$, $7$, $8$, $9$, respectively, and searching in \magma~\cite{magma} for the corresponding subgroup $K<\Sp_8(2)$ satisfying $L=HK$ shows that one of rows~15--18 of Table~\ref{tab1} appears.

\underline{Rows~8 and~9.} For these two rows, $L=\Sp_{10}(2)$ and $L_1=\Sp_8(2)$ or $\Omega_{10}^-(2)$. As $(L_1,H_1,K_1)$ satisfies the conclusion of Theorem~\ref{thm1}, we have $n\leqslant12$. If $B=\SL_2(32){:}5$ or $2^{15}{:}\SL_5(2)$, then $17$ divides $|L|$ but not $|B|$, which yields a contradiction that $17$ divides $|H|=n!/2$. Consequently, $B=\SO_{10}^+(2)$. It follows that $A=\SO_{10}^-(2)$, and $H\cong H_1\cong\A_{12}$ since $(L_1,H_1,K_1)$ satisfies the conclusion of Theorem~\ref{thm1}. However, searching in \magma~\cite{magma} shows that there is no factorization $L=HB$ with $(L,H,B)=(\Sp_{10}(2),\A_{12},\SO_{10}^+(2))$.

\underline{Row~10.} In this case, $L_1=\PSU_3(5)$. Then as $(L_1,H_1,K_1)$ satisfies the conclusion of Theorem~\ref{thm1}, $H\cong H_1\cong\A_7$. However, $|L|/|B|=2^2\cdot3^3\cdot7$ or $3^2\cdot5^2\cdot7$, not divisible by $7!/2$. Hence this case is not possible either.

\underline{Row~11.} As $(L_1,H_1,K_1)$ satisfies the conclusion of Theorem~\ref{thm1}, where $L_1=\PSp_4(3)$, we have $H\cong H_1\cong\A_6$. However, $3^3$ divides $|L|/|B|$ and thus divides $|H|$, which is a contradiction.

\underline{Row~12.} As $(L_1,H_1,K_1)$ satisfies the conclusion of Theorem~\ref{thm1}, where $L_1=\Sp_6(2)$, we have $6\leqslant n\leqslant8$. Since $2^5$ divides $|L|/|B|$ and thus divides $|H|=n!/2$, it follows that $n=8$. However, searching in \magma~\cite{magma} shows that there is no factorization $L=HB$ with $(L,H,B)=(\PSU_6(2),\A_8,\PSU_5(2))$.

\underline{Row~13.} As $(L_1,H_1,K_1)$ satisfies the conclusion of Theorem~\ref{thm1}, where $L_1=\Sp_8(2)$, we have $6\leqslant n\leqslant10$. However, $|L|$ is divisible by $17$ while $|B|$ is not. We thereby deduce from the factorization $L=HB$ that $|H|$ is divisible by $17$, a contradiction.

\underline{Row~14.} Here $L_1=\PSL_4(3)$, and as $(L_1,H_1,K_1)$ satisfies the conclusion of Theorem~\ref{thm1}, $H\cong H_1\cong\A_6$. However, $3^4$ divides $|L|/|B|$ and thus divides $|H|$, which is a contradiction.

\underline{Rows~15 and~16.} For these two rows, $L=\Omega_7(3)$ and $L_1=\PSp_4(3)$, $\PSL_4(3)$ or $\Sp_6(2)$. Then $6\leqslant n\leqslant8$ since $(L_1,H_1,K_1)$ satisfies the conclusion of Theorem~\ref{thm1}. However, $3^3$ divides $|L|/|B|$ and thus divides $|H|=n!/2$, which is a contradiction.

\underline{Rows~17--20.} For these four rows, $L=\POm_8^+(3)$ and $L_1=\Omega_7(3)$, $\PSL_4(3)$, $\PSp_4(3)$ or $\Omega_8^+(2)$. As $(L_1,H_1,K_1)$ satisfies the conclusion of Theorem~\ref{thm1}, $6\leqslant n\leqslant9$. Then checking the condition that $|L|/|B|$ divides $9!/2$ we obtain $B=\Omega_7(3)$ or $3^6{:}\PSL_4(3)$. First assume that $B=\Omega_7(3)$. We then immediately have $n=9$ because $|L|/|B|$ divides $|H|=n!/2$, and it follows that $|K|$ is divisible by $|L|/|H|=|\POm_8^+(3)|/|\A_9|=2^6\cdot3^8\cdot5\cdot13$. This forces $K=B=\Omega_7(3)$, as in row~27 of Table~\ref{tab1}. Next assume that $B=2^6{:}\PSL_4(3)$, and write $B=R{:}Q$ with $R=\Z_3^6$ and $Q=\PSL_4(3)$. Then $n=8$ or $9$ since $|L|/|B|$ divides $n!/2$, and $|K|$ is divisible by $|\POm_8^+(3)|/|\A_9|=2^6\cdot3^8\cdot5\cdot13$.
Noticing that $3^7$ does not divide $|Q|$, we have $K\cap R\geqslant\Z_3$. Moreover, $KR/R$ is a subgroup of $\PSL_4(3)$ with order divisible by $2^6\cdot5\cdot13$, which forces $KR/R=\PSL_4(3)$. Since here $\PSL_4(3)$ acts irreducibly on $\Z_3^6$, we conclude that $K=B=3^6{:}\PSL_4(3)$, as in row~26 or~27 of Table~\ref{tab1}.

\underline{Rows~21 and~22.} For these two rows, $L=\POm_8^+(2)$ and $L_1=\Sp_6(2)$ or $\PSp_4(3)$. As $(L_1,H_1,K_1)$ satisfies the conclusion of Theorem~\ref{thm1}, we have $6\leqslant n\leqslant8$. It follows that $|L|/|B|$ divides $8!/2$, which yields $B=\A_9$. This in turn gives $n=8$ because $n!/2$ is divisible by $|L|/|B|=|\Omega_8^+(2)|/|\A_9|=2^6\cdot3\cdot5$. Now $|K|$ is divisible by $|L|/|H|=|\Omega_8^+(2)|/|\A_8|=2^6\cdot3^3\cdot5$, so we deduce that $K=B=\A_9$, as in row~24 of Table~\ref{tab1}.

\underline{Row~23.} As $(L_1,H_1,K_1)$ satisfies the conclusion of Theorem~\ref{thm1}, where $L_1=\Sp_8(2)$, we have $6\leqslant n\leqslant10$. However, $17$ divides $|L|/|B|$ and thus divides $|H|$, which is a contradiction.
\qed

\section{Proof of Theorem~\ref{thm2}}\label{sec3}

Quasiprimitive permutation groups are divided into eight categories, namely, HA, HS, HC, AS, SD, CD, TW and PA, see~\cite{Praeger1996}. For the purpose of convenience, we include types HS and SD together as \emph{diagonal type}.

%
%
%

\begin{lemma}\label{lem7}
Let $p$ be prime, $k\geqslant1$ and $n\geqslant5$. Then $\AGL_k(p)$ acting naturally on $p^k$ points does not have any transitive subgroup isomorphic to $\A_n$.
\end{lemma}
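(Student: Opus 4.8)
The plan is to argue by contradiction: suppose $H\cong\A_n$ is a transitive subgroup of $G=\AGL_k(p)$ on the point set $\Omega$ of size $p^k$, and play an embedding constraint against a divisibility constraint. Let $T$ be the translation subgroup of $G$; it is elementary abelian of order $p^k$, it is the socle of $G$, and it acts regularly on $\Omega$. Since $T\trianglelefteq G$, the intersection $H\cap T$ is normal in $H$; as $H\cong\A_n$ is simple (here $n\geqslant5$) while $T$ is abelian, this forces $H\cap T=1$. Two consequences follow at once. First, the quotient map $G\to G/T\cong\GL_k(p)$ is injective on $H$, so $\A_n$ embeds in $\GL_k(p)$; hence $k\geqslant d_p(\A_n)$, the least dimension of a faithful $\GF(p)$-representation of $\A_n$, which by simplicity of $\A_n$ is the least dimension of a nontrivial irreducible $\GF(p)$-module. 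Second, transitivity gives $p^k=|\Omega|=|H|/|H_\alpha|$, so $p^k$ divides $|H|=n!/2$; writing $v_p(m)$ for the exponent of $p$ in $m$, this says $k\leqslant v_p(n!/2)$. A counterexample would thus require
$$
d_p(\A_n)\leqslant k\leqslant v_p(n!/2).
$$

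The core of the proof is to show this sandwich is empty. By Legendre's formula $v_p(n!)=(n-s_p(n))/(p-1)$, where $s_p(n)$ is the sum of the base-$p$ digits of $n$, so $v_p(n!/2)\leqslant(n-1)/(p-1)$. For odd $p$ this gives $v_p(n!/2)\leqslant(n-1)/2$, while it is well known that the smallest nontrivial irreducible module of $\A_n$ has dimension $n-2$ or $n-1$, so $d_p(\A_n)\geqslant n-2$; since $n-2>(n-1)/2$ for every $n\geqslant5$, the interval is empty and the odd-$p$ case is finished. For $p=2$ we have $v_2(n!/2)=n-1-s_2(n)$, and the generic value $d_2(\A_n)=n-1$ (for $n$ odd) or $n-2$ (for $n$ even) again closes the gap for every odd $n$ and for every even $n$ with $s_2(n)\geqslant2$, i.e.\ every even $n$ that is not a power of $2$; the remaining small values $n\in\{5,6,7\}$ are disposed of directly from the exact values of $d_2(\A_n)$ and $v_2(n!/2)$ (for instance $d_2(\A_7)=4>3=v_2(7!/2)$).

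The genuine obstacle is the residual family $p=2$ with $n$ a power of $2$, together with the exceptional $n=8$, where the isomorphism $\A_8\cong\GL_4(2)$ makes $d_2(\A_8)=4$ drop below $n-2$ and destroys the dimension bound. For $n=2^m$ with $m\geqslant4$ one has $s_2(n)=1$, hence $v_2(n!/2)=n-2$, and combined with $d_2(\A_n)=n-2$ this pins down $k=n-2$ exactly. The point stabiliser $H_\alpha$ then has order $(n!/2)/2^{n-2}=n!/2^{n-1}$, which is \emph{odd} because $v_2(n!)=n-1$. By the Feit--Thompson theorem $H_\alpha$ is solvable, so I would invoke Dixon's bound that a solvable subgroup of $\Sy_n$ has order less than $24^{(n-1)/3}$; since a routine estimate shows $n!/2^{n-1}>24^{(n-1)/3}$ for all $n=2^m\geqslant16$, no such $H_\alpha$ exists, a contradiction.

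This leaves only $n=8$, which must be settled by hand: here $d_2(\A_8)=4$ and $v_2(8!/2)=6$, so the only candidate degrees are $2^4,2^5,2^6$, requiring a subgroup of $\A_8$ of order $1260$, $630$ or $315$ respectively. I would exclude all three using that a $7$-cycle generates a Sylow $7$-subgroup with $|\Nor_{\A_8}(\mathrm{C}_7)|=21$: any such subgroup contains a $\mathrm{C}_7$, and the Sylow count forces a normaliser of order exceeding $21$ inside it, which is impossible. Thus no transitive $\A_n$ survives, proving the lemma. The only delicate points are the correct bookkeeping of the few exact minimal dimensions $d_p(\A_n)$ for small $n$ and the final finite check at $n=8$; the heart of the argument is the clean dimension-versus-valuation inequality together with the solvable-order bound that kills the power-of-two case.
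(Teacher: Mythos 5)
Your opening coincides with the paper's: $H\cap T=1$ forces an embedding $\A_n\hookrightarrow\GL_k(p)$, and transitivity gives an arithmetic constraint. From there you genuinely diverge. The paper gets the much stronger constraint $n=p^k$ from Guralnick's theorem on subgroups of prime-power index in simple groups \cite{Guralnick}, and the crude uniform bound $k\geqslant n-4$ of \cite[Proposition~5.3.7]{KL} then finishes everything in one line (it forces $p^k-4\leqslant k$, impossible for $p^k=n\geqslant5$). You replace Guralnick by the weaker divisibility $p^k\mid n!/2$, and you pay for this with sharper dimension bounds, Feit--Thompson, Dixon's bound, and a hand analysis at $n=8$. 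That trade is legitimate in principle, but as written it has a genuine hole exactly at $n=8$, $k=6$: your Sylow count does not deliver the contradiction you claim. A subgroup $M\leqslant\A_8$ of order $315=3^2\cdot5\cdot7$ has $n_7(M)\in\{1,15\}$, and in the case $n_7(M)=15$ the normalizer $\Nor_M(\Z_7)$ has order $315/15=21$, which \emph{equals} $|\Nor_{\A_8}(\Z_7)|=21$ rather than exceeding it, so no contradiction follows (your count does work for the orders $1260$ and $630$). To close this case you need an additional idea, for example: every group of order $315$ has a normal Sylow $7$-subgroup (if $n_7(M)=15$ then $7\nmid|\F(M)|$ for the Fitting subgroup $\F(M)$; but $M$ has odd order, hence is solvable, so $\C_M(\F(M))\leqslant\F(M)$, while $|\Aut(N)|$ is coprime to $7$ for every nilpotent group $N$ of order dividing $45$, forcing the Sylow $7$-subgroup to centralize $\F(M)$ and hence lie in it, a contradiction); then $n_7(M)=1$ gives $M\leqslant\Nor_{\A_8}(\Z_7)$, of order $21<315$. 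Alternatively, observe that $M$ has orbits of sizes $7+1$ on the $8$ points, so $M$ would be a subgroup of index $8$ in $\A_7$, which does not exist.

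A second, smaller issue is the field of definition. The minimal-dimension results you invoke as well known (James, Wagner) are statements over splitting fields, whereas your $d_p(\A_n)$ must be taken over the prime field $\GF(p)$, which is the notion relevant to $\AGL_k(p)$; lower bounds transfer from the algebraic closure, but exact values do not, and two of your claims fail over the closure. First, $\A_6\cong\PSL_2(9)$ has a faithful $3$-dimensional module in characteristic $3$, so the inequality $d_p(\A_n)\geqslant n-2$ is false at $(p,n)=(3,6)$; this happens to be harmless, since $3>2=v_3(6!/2)$, but it contradicts your stated bound and should be flagged. Second, and more seriously, $\A_5\cong\SL_2(4)$ has a faithful $2$-dimensional module in characteristic $2$, so with splitting-field values the case $(p,n)=(2,5)$ is \emph{not} disposed of, because $2\leqslant2=v_2(5!/2)$; you must use the prime-field value $d_{\GF(2)}(\A_5)=4$, which is true but needs its own justification (for instance, $|\A_5|=60$ divides neither $|\GL_2(2)|=6$ nor $|\GL_3(2)|=168$). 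With these repairs --- prime-field bookkeeping at $n\in\{5,6\}$ and a genuine argument for the order-$315$ case --- your proof goes through, but it remains considerably longer than the paper's Guralnick-based argument.
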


\begin{proof}
Let $L$ be the socle of $\AGL_k(p)$, and suppose that $H$ is a transitive subgroup of $\AGL_k(p)$ isomorphic to $\A_n$. Then $L=\Z_p^k$, and $H\cap L=1$ or $H$ since $H$ is simple. As $H$ is unsolvable, $k\geqslant2$ and $H\nleqslant L$. It follows that $H\cap L=1$, and thus $H\cong HL/L\leqslant\AGL_k(p)/L\cong\GL_k(p)$. Then by~\cite[Proposition~5.3.7]{KL} we have $k\geqslant n-4$. Moreover, since $H$ is transitive on $p^k$ points, we have $n=p^k$ by~\cite{Guralnick}. Therefore, $k\leqslant p^k-4$. As a consequence, $k\geqslant2^k-4$, forcing $k=2$. This in turn gives $2\geqslant p^2-4$. Hence $p=2$ and $n=p^k=4$, contrary to the assumption that $n\geqslant5$.
\end{proof}

\noindent\textbf{Proof of Theorem~\ref{thm2}:} By Lemma~\ref{lem7}, $G$ has socle $T^k$ for some nonabelian simple group $T$ and integer $k\geqslant1$. First assume that $k=1$. Then $G/T\lesssim\Out(T)$ is solvable, and so $H\cap T\neq1$. Hence $H\cap T=H$, which means that $H$ is a transitive subgroup of $T$. It follows that $T=HT_\alpha$ and thus part~(a) of Theorem~\ref{thm2} holds.

Next assume that $k\geqslant2$. Take a $G$-invariant partition $\calB$ (may be trivial) of $\Omega$ such that $G$ acts primitively on $\calB$. Then $G$ acts faithfully on $\calB$ since $G$ is quasiprimitive on $\Omega$. Now $G^\calB$ is a primitive permutation group isomorphic to $G$, and $H^\calB$ is a quasiprimitive subgroup of $G^\calB$ since $H$ is simple and transitive on $\Omega$. Appealing to \cite{BP} we deduce $k=2$ and $H\cong T$. Therefore, $G$ is either of diagonal type or type PA. If $G$ is of diagonal type, then $H$ is regular and $G$ is primitive as $k=2$, and so part~(b) of Theorem~\ref{thm2} occurs according to~\cite{LPS2010}. Hence we assume that $G$ is of type PA in the following.

Under the above assumption, there exists a group $X$ isomorphic to $G$ such that $X\leqslant R\wr\Sy_2=N.\Sy_2$ with $R$ an almost simple quasiprimitive permutation group on a set $\Delta$, $N=R^2$ and $R$ has socle $T$. Also, the isomorphic image $Y$ of $H$ into $X$ is transitive on $\Delta^2$. Hence $Y\cong\A_n$, and $Y\cap N=1$ or $Y$. If $Y\cap N=1$, then $Y\leqslant\Sy_2$, a contradiction. Thus $Y\cap N=Y$, which means that $Y\leqslant N$. Write $N=R_1\times R_2$ in the natural way with $R_1\cong R_2\cong R$. Then $Y\cap R_1=Y\cap R_2=1$ because $Y$ is simple and transitive. Accordingly, there exists embeddings $\varphi_1,\varphi_2$ of $Y$ into $R$ such that $Y=\{(y^{\varphi_1},y^{\varphi_2})\mid y\in Y\}$. As $Y\cong H\cong T$ is nonabelian simple and $R/T$ is solvable, one sees steadily that $\varphi_1,\varphi_2$ are in fact isomorphisms from $Y$ to $T$. Taking an arbitrary point $\delta\in\Delta$, since $Y$ is transitive on $\Delta^2$, we have
$$
T=(Y^{\varphi_1}\cap T_\delta)^{\varphi_1^{-1}\varphi_2}T_\delta=T_\delta^{\varphi_1^{-1}\varphi_2}T_\delta.
$$
Then applying~\cite[Theorem~1.1]{Baumeister} we obtain $T=\A_6$, whence $\A_6\times\A_6\lesssim G\lesssim\Sy_6\wr\Sy_2$. Moreover, $H\cong\A_6$ and $G=HK$ for some core-free subgroup $K$ of $G$. Searching in \magma~\cite{magma} for such quasiprimitive permutation groups $G$ shows that $K$ must be maximal in $G$ of index $36$ or $360$. As a consequence, $G$ is primitive on $\Omega$. If $K$ has index $36$ in $G$, then $G$ is a primitive permutation group of degree $36$ with socle $\A_6\times\A_6$, and consulting~\cite[Appdendix~B]{DM} we conclude that part~(c) of Theorem~\ref{thm2} holds. If $K$ has index $360$ in $G$, then $H$ is regular on $\Omega$, which leads to part~(b) of Theorem~\ref{thm2} by~\cite{LPS2010}.
\qed

\noindent\textsc{Acknowledgements.}
The author acknowledges the support of China Postdoctoral Science Foundation Grant.

\end{document}